\newtheorem{theorem}{Theorem}[section]
\newtheorem{lemma}[theorem]{Lemma}
\newtheorem{proposition}[theorem]{Proposition}
\newtheorem{remark}[theorem]{Remark}
\newtheorem{definition}{Definition}[section]
\numberwithin{equation}{section}
\begin{document}

\title[nonradiating sources of Maxwell's equations]{Nonradiating sources of Maxwell's equations}

\author{Peijun Li}
\address{LSEC, ICMSEC, Academy of Mathematics and Systems Science, Chinese Academy of Sciences, Beijing 100190, China, and School of Mathematical Sciences, University of Chinese Academy of Sciences, Beijing 100049, China.}
\email{lipeijun@lsec.cc.ac.cn}

\author{Jue Wang}
\address{School of Mathematics, Hangzhou Normal University, Hangzhou 311121, China}
\email{wangjue@hznu.edu.cn}

\thanks{The second author is supported by NSFC (No.12371420) and the Natural Science Foundation of Zhejiang Province (No. LY23A010004).}

\subjclass[2010]{78A40, 78A46}

\keywords{Maxwell's equations, nonradiating sources, Green's function, inverse source problem, nonuniqueness.}

\begin{abstract}
This paper presents a thorough investigation into nonradiating sources of Maxwell's equations. Various characterizations are developed to clarify the properties of nonradiating sources, considering their varying degrees of regularity. Furthermore, the characterizations are examined on far-field patterns and near-field data of the electric field, along with the null spaces of integral operators. The study includes the explicit construction of several illustrative examples to demonstrate the presence of nonradiating sources in different null spaces.
\end{abstract}

\maketitle

\section{Introduction}\label{S:in}

In the field of electromagnetism, the dynamics of electric and magnetic fields are governed by Maxwell's equations, which are propelled by electric currents. A nonradiating source refers to a configuration of electric currents that refrains from emitting electromagnetic radiation. The characterization of nonradiating sources constitutes a classical problem in electromagnetic theory, with far-reaching implications for diverse applications such as antenna design, stealth technology, and electromagnetic compatibility.

It is recognized that surface measurements of electromagnetic fields cannot uniquely determine volume sources, mainly because nonradiating sources are inherently present \cite{DS1982, KLR2005}. A nonradiating source must satisfy specific conditions to ensure that the associated fields do not propagate energy to infinity \cite{A2004, EA2004}. The existence of nonradiating sources carries significant implications for the inverse source problem in electromagnetism. This problem involves determining the distribution of electric currents based on measurements of electromagnetic fields at a specified distance, often at the boundary of a domain \cite{GF2003}. Understanding the limitations imposed by nonradiating sources is crucial for correctly interpreting the results of inverse source problems and for designing measurement systems that can effectively deal with these challenges. Inverse source problems are encountered in various scientific and engineering disciplines, including areas such as medical imaging, environmental monitoring, and remote sensing.

Significant research efforts have been dedicated to investigating the inverse source problem related to a variety of wave equations, including acoustic, electromagnetic, elastic, and plate waves. From mathematical and computational perspectives, the uniqueness problem poses a challenge in solving inverse source problems. Because of the presence of nonradiating sources, the inverse source problem at a specific frequency is inherently ill-posed. Consequently, it becomes impossible to deduce the wave source uniquely based solely on a single far-field pattern or boundary data at a fixed frequency \cite{EV2009, MD1999, EAR2000}. The concept of nonradiating sources was initially introduced in \cite{BC1977}, where the problem of nonuniqueness for the inverse source problem in acoustics and electromagnetics was investigated. Nonradiating sources for both the acoustic and elastic wave equations were studied in \cite{B2018, B2019}. To address the challenge posed by nonuniqueness, the work presented in \cite{BLLT2011} offers a comprehensive review of the subject matter related to inverse scattering problems. The focus is on the utilization of multi-frequency boundary measurements as a strategic approach. In addition to uniqueness, the study of stability has received significant attention in inverse source problems, as evidenced by the works in \cite{BLT2010, BLZ2020, CIL2016, LY2017}. Recently, research has undertaken the exploration of the more intricate inverse source problems associated with stochastic wave equations, as exemplified by \cite{BCL2016, LLW2022}, wherein the sources are modeled as random fields rather than deterministic functions.

The inverse source problems associated with Maxwell's equations, particularly the magnetoencephalograph (MEG) problem, have been extensively investigated with the objective of determining the current density from surface measurements of the electric or magnetic field \cite{NOHTA2007}. In the study conducted by \cite{HR1998}, certain uniqueness results were established, along with the derivation of explicit formulas aimed at identifying the location and moment of the dipole source. Similarly, \cite{RLR2002} examined uniqueness issues and explored numerical solutions for reconstructing a current distribution from the magnetic field. In \cite{AYV2004} and \cite{GAF2005}, discussions on the non-uniqueness of the inverse MEG problem were provided, emphasizing its generic limitation of the inverse source problem. Utilizing the variational approach outlined in \cite{AM2006}, it was shown that the inverse source problem typically lacks a unique solution. In \cite{V2012}, a simple uniqueness result was presented for identifying the location and shape of an electromagnetic source function using multiple frequency information. \cite{ABF2002} introduced a computational method to determine the current dipole from boundary measurements of the fields, based on a low-frequency asymptotic analysis of Maxwell's equations. In the work by \cite{BN2013}, a numerical approach was formulated to reconstruct multiple point dipoles using boundary measurements of both electric and magnetic fields. The Fourier method was suggested for solving the multi-frequency electromagnetic inverse source problem in \cite{WMGL2018}. In \cite{LY2005}, a conditional stability was investigated for the inverse source problem of the time-dependent Maxwell's equations in anisotropic media. In a recent study by \cite{LW2021}, the inverse random source problem for Maxwell's equations was investigated, illustrating that the diagonal entries of the strength matrix for the random source can be uniquely determined by utilizing the amplitude of the electric field.

Motivated by \cite{BC1977}, the work presented in \cite{LW2023} investigated characterizations of nonradiating sources for the two- and three-dimensional biharmonic wave equations. The objective of this study is to offer diverse characterizations of nonradiating sources associated with Maxwell's equations. Owing to the distinct natures inherent in the biharmonic wave equation and Maxwell's equations, especially due to differences in their corresponding Green's functions, the analytical approach and characterizations necessarily diverge. We define a collection of criteria that both determine and are required for a source to exhibit nonradiating behavior. This is accomplished through the utilization of the integral representation of the electric field and the expansion of the dyadic Green's function for Maxwell's equations by using spherical harmonic functions. We provide descriptions of nonradiating sources by taking account of sources exhibiting different levels of regularity. Additionally, we discuss descriptions that incorporate far-field patterns and near-field data of the electric field, along with the null spaces of integral operators. Lastly, we construct several illustrative examples explicitly to show the presence of nonradiating sources. These instances highlight that the null spaces are nontrivial and emphasize that the inverse source problem is inherently ill-posed, lacking a unique solution at a given frequency.

The structure of the paper is as follows: In Section \ref{S:Max}, we introduce Maxwell's equations and the problem formulation. Section \ref{S:Eqc} provides a detailed exploration of the characterizations of nonradiating sources in the context of Maxwell's equations. In Section \ref{S:ex}, various examples of nonradiating sources are presented. Finally, concluding remarks are discussed in Section \ref{S:co}.

\section{Maxwell's equations}\label{S:Max}

Consider the time-harmonic Maxwell's equations in three dimensions
\begin{align}\label{MaEq}
\nabla\times\boldsymbol{H}=-\mathrm{i}\omega\epsilon\boldsymbol{E}
+\boldsymbol J_0, \quad \nabla\times\boldsymbol{E}
=\mathrm{i}\omega\mu\boldsymbol{H},
\end{align}
where $\omega> 0$ denotes the angular frequency, $\epsilon$ and $\mu$ are positive constants representing the electric permittivity and magnetic permeability, respectively, $\boldsymbol{E}$ is the electric field, $\boldsymbol{H}$ denotes the magnetic field, and $\boldsymbol J_0$ represents the electric current, assumed to be compactly supported within the domain $D$ enclosed in the ball $B_R=\{x\in\mathbb R^3: |x|<R\}$. It is noteworthy that $\boldsymbol J_0$ is regarded as a general vector field and is not assumed to be divergence free, i.e., $\nabla\cdot\boldsymbol J_0\neq 0$. As usual, the Silver--M\"{u}ller radiation condition is enforced to guarantee the well-posed nature of the scattering problem
\begin{equation}\label{SMrc}
\lim_{|x|\to\infty}((\nabla\times\boldsymbol{E})\times x-{\rm i}\kappa |x|\boldsymbol{E})=0,
\end{equation}
where $\kappa=\omega\sqrt{\epsilon\mu}$ is the wavenumber.

It is known that a solution $\boldsymbol E$ to \eqref{MaEq}--\eqref{SMrc} exhibits the asymptotic behavior given by
\begin{align}\label{Easb}
\boldsymbol E(x)= \frac{e^{{\rm i}\kappa|x|}}{|x|}\bigg[ {\boldsymbol E}_{\infty} ({\hat x})+O\bigg(\frac{1}{|x|}\bigg)\bigg],\quad |x|\to \infty,
\end{align}
where ${\hat x}={x}/|x|$ is the unit observation direction vector, and ${\boldsymbol E}_{\infty}$ is referred to as the far-field pattern of ${\boldsymbol E}$.

By removing $\boldsymbol H$ from \eqref{MaEq}, we derive the Maxwell system corresponding to $\boldsymbol E$:
\begin{equation}\label{Eqal-1}
 \nabla\times(\nabla\times\boldsymbol{E})-\kappa^2 \boldsymbol E=\boldsymbol J,
\end{equation}
where $\boldsymbol J={\rm i}\omega\mu\boldsymbol J_0$ has the same compact support $D$ contained in $B_R$. It follows from the identity $\nabla\times(\nabla\times\boldsymbol{E})=-\Delta\boldsymbol E+\nabla(\nabla\cdot\boldsymbol E)$ that \eqref{Eqal-1} can be equivalently written as 
\begin{align}\label{Eqal-2}
\nabla(\nabla\cdot\boldsymbol{E})-(\Delta\boldsymbol{E} +\kappa^2\boldsymbol{E})
=\boldsymbol{J}. 
\end{align}
Using the identity $\nabla\cdot[\nabla\times(\nabla\times\boldsymbol{E})]=0$ and \eqref{Eqal-1}, we have
\begin{align}\label{Eqal-2-1}
-\kappa^2\nabla(\nabla\cdot{\boldsymbol E})=\nabla(\nabla\cdot{\boldsymbol J}).
\end{align}
Combining \eqref{Eqal-2} and \eqref{Eqal-2-1} leads to 
\begin{equation}\label{Eqal-2-3}
 \Delta\boldsymbol{E} +\kappa^2\boldsymbol{E}=-(\boldsymbol J+ \kappa^{-2}\nabla(\nabla\cdot{\boldsymbol J})).
\end{equation}
Noting $\nabla\times(\nabla(\nabla\cdot{\boldsymbol E}))=0$, we obtain from \eqref{Eqal-1} and \eqref{Eqal-2-1} that 
\begin{align} \label{Eqal-2-2}
\nabla\times\nabla\times(\boldsymbol{E}+ \kappa^{-2}\nabla(\nabla\cdot{\boldsymbol E}))
-\kappa^2 (\boldsymbol{E}+ \kappa^{-2}\nabla(\nabla\cdot{\boldsymbol E}))=\boldsymbol J+ \kappa^{-2}\nabla(\nabla\cdot{\boldsymbol J}).
\end{align}
Hence, the function $\boldsymbol J+ \kappa^{-2}\nabla(\nabla\cdot{\boldsymbol J})$ can be considered as the source for both the Helmholtz equation \eqref{Eqal-2-3} and the Maxwell equation \eqref{Eqal-2-2}.

Let $g$ represent the Green's function of the three-dimensional Helmholtz equation, defined by 
\[
 \Delta g(x,y)+\kappa^2 g(x,y)=-\delta(x-y),
\]
where $\delta$ is the Dirac delta function. Specifically, the expression for $g(x,y)$ is given as
\[
 g(x,y)=\frac{e^{{\rm i}\kappa|x-y|}}{4\pi|x-y|}. 
\]
Denote by $\boldsymbol{G}$ the dyadic Green's tensor associated with Maxwell's equations, satisfying
\begin{equation}\label{Eqal-3}
\nabla\times\nabla\times\boldsymbol{G}(x, y)-\kappa^{2}\boldsymbol{G}(x, y)=\delta(x-y)\boldsymbol{I}, 
\end{equation}
where $\boldsymbol{I}$ is the $3\times 3$ identity matrix. Explicitly, the dyadic Green's function $\boldsymbol{G}$ can be expressed as
\begin{align}\label{DGM}
 \boldsymbol{G}(x, y)=\big[\boldsymbol{I}+\kappa^{-2}\nabla\nabla\big]g(x,y)=(G_{l,s}(x,y))_{l,s=1,2,3}. 
\end{align}

A straightforward calculation yields 
\begin{align*}
\frac{\partial g(x, y)}{\partial x_{l}}=-\frac{\partial g(x, y)}{\partial y_{l}}=\frac{(x_{l}-y_{l})}{|x- y|}\bigg(\mathrm{i}\kappa -\frac{1}{|x- y|}\bigg)g(x, y),\quad\quad\quad\quad l=1,2,3,
\end{align*}
and
\begin{align*}
\frac{\partial^2 g(x, y)}{\partial x_{l}\partial x_{s}}&=\frac{\partial^2 g(x, y)}{\partial y_{l}\partial y_{s}}
=\bigg[\frac{(\mathrm{i}\kappa|x- y|-1)}{|x- y|^2}\delta_{ls}-\frac{\kappa^2(x_{l}-y_{l})(x_{s}-y_{s})}{|x- y|^2}
\nonumber\\
&\quad -\frac{3\mathrm{i}\kappa(x_{l}-y_{l})(x_{s}-y_{s})}{|x- y|^3}
+\frac{3(x_{l}-y_{l})(x_{s}-y_{s})}{|x-y|^4} \bigg]g(x, y),\quad l,s=1,2,3.
\end{align*}
Hence, for $l,s=1,2,3$, we obtain 
\begin{align*}
&G_{ls}(x, y)=\delta_{ls}g(x, y)+\frac{1}{\kappa^2}\frac{\partial^2 g(x, y)}{\partial y_{l}\partial y_{s}}\nonumber\\
&=\bigg[\bigg(\delta_{ls}-\frac{(x_{l}-y_{l})(x_{s}-y_{s})}{|x- y|^2}\bigg)
+\frac{(\mathrm{i}\kappa|x- y|-1)}{\kappa^2|x- y|^2}\bigg(\delta_{ls}-\frac{3(x_{l}-y_{l})(x_{s}-y_{s})}{|x- y|^2}\bigg) \bigg]g(x, y),
\end{align*}
where $\delta_{ls}$ denotes the Kronecker delta.

Utilizing \eqref{Eqal-3} and noting that the support of $\boldsymbol J$ is confined to $B_R$, we obtain that the solution to \eqref{MaEq}--\eqref{SMrc} can be expressed as
\begin{align}\label{EIN}
\boldsymbol E (x)=\int_{B_R}\boldsymbol G(x, y) \boldsymbol{J}(y){\rm d}y,\quad x\in\mathbb R^3.
\end{align}
If $\nabla(\nabla\cdot{\boldsymbol  J})$ is well-defined, by using \eqref{Eqal-2-3} and \eqref{Eqal-2-2}, we can deduce 
\begin{align}\label{EIN-gr1}
\boldsymbol E (x) =\int_{B_R} g(x, y) \big(\boldsymbol J (y)+ \kappa^{-2}\nabla(\nabla\cdot{\boldsymbol J}(y))\big){\rm d}y,\quad x\in\mathbb R^3,
\end{align}
and
\begin{align}\label{EIN-gr}
\boldsymbol E (x)+\kappa^{-2}\nabla(\nabla\cdot{\boldsymbol E} (x))
=\int_{B_R}\boldsymbol G(x, y) \big(\boldsymbol J (y)+ \kappa^{-2}\nabla(\nabla\cdot{\boldsymbol J}(y))\big){\rm d}y,\quad x\in\mathbb R^3.
\end{align}

Let us provide a formal definition of a nonradiating source in the context of Maxwell's equations.

\begin{definition}\label{def-E}
Consider a source denoted as $\boldsymbol{J}$, with a compact support denoted as $D$, entirely confined in the ball $B_R$. A source is classified as nonradiating if the solution $\boldsymbol{E}$ to \eqref{EIN} is identically zero outside of the ball $B_R$.
\end{definition}

\begin{lemma}\label{Lemm-Hrc} 
The solution $\boldsymbol E$ to \eqref{EIN-gr1} satisfies the Silver--M\"{u}ller radiation condition \eqref{SMrc}.
\end{lemma}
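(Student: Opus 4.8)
The plan is to read off the far-field asymptotics of $\boldsymbol E$ and of $\nabla\times\boldsymbol E$ directly from the scalar representation \eqref{EIN-gr1}, insert them into the radiation quantity in \eqref{SMrc}, and then show that the only potential obstruction—the radial component of the far-field pattern—vanishes thanks to the special structure of the modified source.

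First I would set $\boldsymbol F := \boldsymbol J + \kappa^{-2}\nabla(\nabla\cdot\boldsymbol J)$, which is compactly supported in $B_R$, so that each Cartesian component $E_j(x)=\int_{B_R} g(x,y)F_j(y)\,\mathrm dy$ is a Helmholtz volume potential. From the standard expansion $g(x,y)=\frac{e^{\mathrm i\kappa|x|}}{4\pi|x|}\big(e^{-\mathrm i\kappa\hat x\cdot y}+O(|x|^{-1})\big)$ together with the gradient formula for $\partial_{x_l}g$ already recorded in the excerpt, differentiation under the integral sign yields, uniformly in $\hat x$,
\begin{align*}
\boldsymbol E(x)=\frac{e^{\mathrm i\kappa|x|}}{|x|}\big(\boldsymbol E_\infty(\hat x)+O(|x|^{-1})\big),\qquad \nabla\times\boldsymbol E(x)=\mathrm i\kappa\frac{e^{\mathrm i\kappa|x|}}{|x|}\big(\hat x\times\boldsymbol E_\infty(\hat x)+O(|x|^{-1})\big),
\end{align*}
where $\boldsymbol E_\infty(\hat x)=\frac{1}{4\pi}\int_{B_R}e^{-\mathrm i\kappa\hat x\cdot y}\boldsymbol F(y)\,\mathrm dy$. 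The leading term of the curl arises solely from letting the gradient act on the oscillatory factor $e^{\mathrm i\kappa|x|}$, which produces the factor $\mathrm i\kappa\hat x$.

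Next I would substitute these asymptotics into the radiation expression. Writing $x=|x|\hat x$ and using the vector identity $(\hat x\times\boldsymbol E_\infty)\times\hat x=\boldsymbol E_\infty-(\hat x\cdot\boldsymbol E_\infty)\hat x$, a direct computation gives
\begin{align*}
(\nabla\times\boldsymbol E)\times x-\mathrm i\kappa|x|\boldsymbol E=-\mathrm i\kappa e^{\mathrm i\kappa|x|}\,(\hat x\cdot\boldsymbol E_\infty(\hat x))\,\hat x+O(|x|^{-1}).
\end{align*}
Thus verifying \eqref{SMrc} reduces to showing that the radial component $\hat x\cdot\boldsymbol E_\infty$ of the far-field pattern is identically zero.

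The decisive step, which I expect to be the crux of the argument, is precisely this tangentiality $\hat x\cdot\boldsymbol E_\infty(\hat x)\equiv0$, and it is here that the correction term $\kappa^{-2}\nabla(\nabla\cdot\boldsymbol J)$ is indispensable. I would compute $\hat x\cdot\boldsymbol E_\infty$ by integrating by parts in $y$, using $\nabla_y e^{-\mathrm i\kappa\hat x\cdot y}=-\mathrm i\kappa\hat x\,e^{-\mathrm i\kappa\hat x\cdot y}$ and $|\hat x|=1$. Two integrations by parts convert the contribution of $\kappa^{-2}\hat x\cdot\nabla(\nabla\cdot\boldsymbol J)$ into exactly $-\int_{B_R}e^{-\mathrm i\kappa\hat x\cdot y}\,\hat x\cdot\boldsymbol J(y)\,\mathrm dy$, cancelling exactly the contribution of the $\hat x\cdot\boldsymbol J$ term; the compact support of $\boldsymbol J$ ensures all boundary terms disappear. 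Hence $\hat x\cdot\boldsymbol E_\infty\equiv0$, the leading term above vanishes, the remainder is $O(|x|^{-1})$, and \eqref{SMrc} follows. The only technical care needed beyond this is justifying the uniform asymptotics and the differentiation under the integral sign, which is routine given the explicit kernel estimates for $g$ and its derivatives.
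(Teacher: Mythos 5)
Your proof is correct, but it follows a genuinely different route from the paper's. The paper argues algebraically: starting from the scalar representation \eqref{EIN-gr1}, it adds and subtracts $\kappa^{-2}\nabla\nabla g$ inside the integral to recast $\boldsymbol E$ as $\int_{B_R}\boldsymbol G(x,y)\,(\boldsymbol J+\kappa^{-2}\nabla(\nabla\cdot\boldsymbol J))\,{\rm d}y-\kappa^{-2}\nabla(\nabla\cdot\boldsymbol E)$, then uses the identity \eqref{Eqal-2-1} and the compact support of $\boldsymbol J$ to conclude $\nabla\cdot\boldsymbol E=0$ for $|x|>R$, so that outside $B_R$ the field coincides with a superposition of columns of the dyadic Green's tensor, whose radiating character is taken as known. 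You instead verify the Silver--M\"uller condition directly: you expand $\boldsymbol E$ and $\nabla\times\boldsymbol E$ asymptotically, reduce \eqref{SMrc} to the tangentiality $\hat x\cdot\boldsymbol E_\infty(\hat x)\equiv 0$, and establish that by two integrations by parts, which is exactly where the correction term $\kappa^{-2}\nabla(\nabla\cdot\boldsymbol J)$ cancels the radial part of $\hat x\cdot\boldsymbol J$. Your computation checks out: the second term contributes $\kappa^{-2}(-\mathrm i\kappa)^2|\hat x|^2\int e^{-\mathrm i\kappa\hat x\cdot y}\,\hat x\cdot\boldsymbol J\,{\rm d}y=-\int e^{-\mathrm i\kappa\hat x\cdot y}\,\hat x\cdot\boldsymbol J\,{\rm d}y$, and the boundary terms vanish because $D$ is strictly contained in $B_R$. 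What each approach buys: the paper's argument is shorter and needs no asymptotic estimates, but it leans on two external facts (the radiating property of $\boldsymbol G$ and the divergence identity); yours is self-contained and, as a by-product, exhibits the far-field pattern explicitly and explains why the projection matrix $\hat{\boldsymbol G}(\hat x)$ in \eqref{Emax-inf} (projection onto the plane orthogonal to $\hat x$) must appear -- at the cost of having to justify the uniformity of the asymptotics and the differentiation under the integral sign, which you correctly flag as routine. Note also that your argument implicitly assumes the regularity $\boldsymbol J\in H(\mathrm{div};\mathrm{grad};B_R)$ needed for the integrations by parts, but that is already presupposed by the representation \eqref{EIN-gr1} itself.
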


\begin{proof}
From \eqref{DGM} and \eqref{EIN-gr1}, we have from a straightforward calculation that 
\begin{align}\label{EIN-gr1-1}
\boldsymbol E (x) &=\int_{B_R} g(x, y) \big(\boldsymbol J (y)+ \kappa^{-2}\nabla(\nabla\cdot{\boldsymbol J}(y))\big){\rm d}y\nonumber\\
&=\int_{B_R} \big[\big(\boldsymbol{I}+\kappa^{-2}\nabla\nabla\big)g(x, y)\big] \big(\boldsymbol J (y)+ \kappa^{-2}\nabla(\nabla\cdot{\boldsymbol J}(y))\big){\rm d}y\nonumber\\
&\quad
-\int_{B_R}  [\kappa^{-2}\nabla\nabla g(x, y)] \big(\boldsymbol J (y)+ \kappa^{-2}\nabla(\nabla\cdot{\boldsymbol J}(y))\big){\rm d}y\nonumber\\
&=\int_{B_R} \boldsymbol G(x, y)  \big(\boldsymbol J (y)+ \kappa^{-2}\nabla(\nabla\cdot{\boldsymbol J}(y))\big){\rm d}y \notag\\
&\quad -\kappa^{-2}\nabla\nabla\cdot\int_{B_R}   g(x, y) \big(\boldsymbol J (y)+ \kappa^{-2}\nabla(\nabla\cdot{\boldsymbol J}(y))\big){\rm d}y\nonumber\\
&=\int_{B_R} \boldsymbol G(x, y)  \big(\boldsymbol J (y)+ \kappa^{-2}\nabla(\nabla\cdot{\boldsymbol J}(y))\big){\rm d}y
-\kappa^{-2}\nabla(\nabla\cdot{\boldsymbol E} (x)),\quad x\in\mathbb R^3.
\end{align}
Noting that $\kappa^{-2}\nabla\cdot{\boldsymbol E} (x)=\nabla\cdot{\boldsymbol J} (x)$ and $\boldsymbol J$ has compact support $D$ contained in the ball $B_R$, we arrive at 
\begin{align}\label{EIN-gr1-2}
\nabla\cdot{\boldsymbol E} (x)=0,\quad |x|>R.
\end{align}
Combining \eqref{EIN-gr1-1} and \eqref{EIN-gr1-2}, we obtain
\begin{align*}
\boldsymbol E (x) =\int_{B_R} \boldsymbol G(x, y)  \big(\boldsymbol J (y)+ \kappa^{-2}\nabla(\nabla\cdot{\boldsymbol J}(y))\big){\rm d}y,\quad |x|>R,
\end{align*}
which implies that \eqref{EIN-gr1} satisfies the Silver--M\"{u}ller radiation condition \eqref{SMrc}.
\end{proof}

\section{Characterizations of nonradiating sources}\label{S:Eqc}

In this section, we present characterizations of nonradiating sources of Maxwell's equations, taking into account sources with varying degrees of regularity. Furthermore, we elaborate on characterizations that involve far-field patterns and near-field data of the electric field, as well as the null spaces of integral operators.

\subsection{Characterizations with different regularity}

Define the space of square-integrable functions and divergence 
\begin{align*}
H(\mathrm{div};B_R)=\big\{\boldsymbol{\Psi}\in (L^2(B_R))^3: \nabla\cdot\boldsymbol{\Psi}\in L^2(B_R)\big\},
\end{align*}
and its subspace
\begin{align*}
H_0(\mathrm{div};B_R)=
\big\{\boldsymbol{\Psi}\in H(\mathrm{div};B_R): (\boldsymbol{\Psi}\cdot\boldsymbol{n})|_{\partial B_R}=0\big\},
\end{align*}
where $\boldsymbol n$ stands for the unit normal vector at $\partial B_R$ pointing outward from $B_R$. Denote the space
\begin{align*} 
H(\mathrm{div}; \mathrm{grad}; B_R)=\big\{\boldsymbol{\Psi}\in (L^2(B_R))^3 : \nabla(\nabla\cdot\boldsymbol{\Psi})\in (L^2(B_R))^3\big\}.
\end{align*}

By the addition theorem (cf. \cite[Theorem 2.11]{DR-2013}), the Green's function $g$ for the three-dimensional Helmholtz equation admits the expansion
\begin{align}\label{Gex}
g(x, y)=\frac{\exp{(\mathrm{i}\kappa|x-y|)}}{4\pi|x -y|}
=\mathrm{i}\kappa\sum_{n=0}^{\infty}\sum_{m=-n}^{n}h_{n}^{(1)}(\kappa|x|)Y_{n}^{m}({\hat x})j_{n}(\kappa|y|)\overline{Y_{n}^{m}({\hat y})},\quad |x|>|y|,
\end{align}
where ${\hat x}={x}/|x|$ and ${\hat y}={y}/|y|$, the spherical harmonics $\{Y_{n}^{m}: m=-n, \dots, n, n=0, 1, \dots\}$ form a complete orthonormal system in the space of square integrable functions on the unit sphere, $j_n$ and $y_n$ represent the spherical Bessel function and spherical Neumann function of order $n$, respectively, and $h_{n}^{(1)}:=j_n+{\rm i}y_n$ is recognized as the spherical Hankel function of the first kind of order $n$.

It follows from \eqref{DGM}--\eqref{EIN} and \eqref{Gex} that we have
\begin{align}\label{EINE}
\boldsymbol E (x)&=\int_{B_R}\boldsymbol  G(x, y) \boldsymbol{J}(y){\rm d}y=\int_{B_R}\Big(\big(\boldsymbol{I}+\kappa^{-2}\nabla_{y}\nabla_{y}\big)g(x, y)\Big)\boldsymbol{J}(y){\rm d}y\nonumber\\
&=\mathrm{i}\kappa \sum_{n=0}^{\infty}\sum_{m=-n}^{n} h_{n}^{(1)}(\kappa|x|)Y_{n}^{m}({\hat x})\beta_{n}^{m},\quad\quad\quad |x|>|y|,
\end{align}
where
\begin{align}\label{EINE-0}
\beta_{n}^{m}
&=\int_{B_R}\Big(\big(\boldsymbol{I}+\kappa^{-2}
\nabla_{y}\nabla_{y} \big)j_{n}(\kappa|y|)\overline{Y_{n}^{m}({\hat y})}\Big)\boldsymbol{J}(y){\rm d} y
:=\alpha_n^m +\gamma_n^m
\end{align}
with
\begin{align}
\label{alpha}\alpha_n^m &=\int_{B_R} j_{n}(\kappa|y|)\overline{Y_{n}^{m}({\hat y})}\boldsymbol{J}(y){\rm d}y,\\
\label{gamma}\gamma_n^m &=\frac{1}{\kappa^2}\int_{B_R}\big(\nabla_{y}\nabla_{y}j_{n}(\kappa|y|)\overline{Y_{n}^{m}({\hat y})} \big)\boldsymbol{J}(y){\rm d}y.
\end{align}

Next, we discuss characterizations of nonradiating sources, considering variations in their degrees of regularity.

(1) For $\boldsymbol{J}\in (L^2(B_R))^3$, it follows from \eqref{EINE}--\eqref{EINE-0} that $\boldsymbol{J}$ is nonradiating if and only if $\beta_n^m=\alpha_n^m+\gamma_n^m=0$ for all $n=0, 1, \dots, m=-n, \dots, n$.

(2) For $\boldsymbol{J}\in H_0(\mathrm{div};B_R)$, we have from $\eqref{gamma}$ and the divergence theorem that 
\begin{align}\label{EINE-11}
\gamma_{n}^{m} &=-\frac{1}{\kappa^2}\int_{B_R}\big[\big(\nabla_y j_{n}(\kappa|y|)\overline{Y_{n}^{m}({\hat y})} \big)(\nabla\cdot{\boldsymbol  J}(y))\big]{\rm d}y\nonumber\\
&\quad+\frac{1}{\kappa^2}\int_{\partial B_R}\big[\big(\nabla_y j_{n}(\kappa|y|)\overline{Y_{n}^{m}({\hat y})} \big)(\boldsymbol  n(y)\cdot{\boldsymbol  J}(y))\big]{\rm d}y\nonumber\\
&=-\frac{1}{\kappa^2}\int_{B_R}\big[\big(\nabla_y j_{n}(\kappa|y|)\overline{Y_{n}^{m}({\hat y})} \big)(\nabla\cdot{\boldsymbol  J}(y))\big]{\rm d}y:=\zeta_n^{m}, 
\end{align}
which implies that $\boldsymbol{J}$ is nonradiating if and only if $\beta_n^m=\alpha_n^m+\zeta_n^m=0$ for all $n=0, 1, \dots, m=-n, \dots, n$.

(3) For $\boldsymbol{J}\in H(\mathrm{div}; \mathrm{grad}; B_R)$,  we obtain from \eqref{EIN-gr1} and \eqref{Gex} that 
\begin{align*}
\boldsymbol  E (x)
&=\int_{B_R} g(x, y) \big(\boldsymbol  J (y)+ \kappa^{-2}\nabla(\nabla\cdot{\boldsymbol  J}(y))\big){\rm d}y\nonumber\\
&=\mathrm{i}\kappa\sum_{n=0}^{\infty}\sum_{m=-n}^{n}h_{n}^{(1)}(\kappa|x|)Y_{n}^{m}({\hat x})\int_{B_R}\big(j_{n}(\kappa|y|)\overline{Y_{n}^{m}({\hat y})}\big) \big(\boldsymbol  J (y)+\kappa^{-2}\nabla(\nabla\cdot{\boldsymbol  J}(y))\big){\rm d}y\nonumber\\
&=\mathrm{i}\kappa\sum_{n=0}^{\infty}\sum_{m=-n}^{n}h_{n}^{(1)}(\kappa|x|)Y_{n}^{m}({\hat x})({\alpha}_n^{m}
+\eta_n^{m}),\quad |x|>R,
\end{align*}
where
\begin{align*}
\eta_{n}^{m}=\frac{1}{\kappa^2}\int_{B_R}j_{n}(\kappa|y|)\overline{Y_{n}^{m}({\hat y})}\big(\nabla(\nabla\cdot{\boldsymbol  J}(y))\big){\rm d}y.
\end{align*}
Hence, the source $\boldsymbol{J}$ is nonradiating if and only if $\beta_n^m=\alpha_n^m+\eta_n^m=0$ for all $n=0, 1, \dots$ and $m=-n, \dots, n$.

Building upon the preceding discussion, we deduce the following characterizations for nonradiating sources of Maxwell's equations, accounting for varying degrees of regularity.

\begin{theorem}\label{The-123}
Consider $\boldsymbol{J}$ as a source for the Maxwell equation \eqref{Eqal-2}. The following results hold:

\begin{enumerate}
\item[(i)] The source $\boldsymbol{J}\in (L^2(B_R))^3$ is nonradiating if and only if $\beta_n^{m}=\alpha_n^{m}
+\gamma_n^{m}=0$ for all $n = 0,1,\dots, m = -n, \dots, n$.

\item[(ii)] The source $\boldsymbol{J}\in H_0(\mathrm{div};B_R)$ is nonradiating if and only if $\beta_n^{m}=\alpha_n^{m}+\zeta_n^{m}=0$ for all $n = 0,1,\dots, m = -n, \dots, n$.

\item[(iii)] The source $\boldsymbol{J}\in H(\mathrm{div}; \mathrm{grad}; B_R)$ is nonradiating if and only if $\alpha_n^{m}+\eta_n^{m}=0$ for all $n = 0,1,\dots, m = -n, \dots, n$.
\end{enumerate}
\end{theorem}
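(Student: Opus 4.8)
The plan is to isolate the single analytic fact underlying all three parts and then apply it in each regularity setting. The core claim is that any exterior expansion of the form $\boldsymbol F(x)=\mathrm{i}\kappa\sum_{n=0}^{\infty}\sum_{m=-n}^{n}h_n^{(1)}(\kappa|x|)Y_n^m(\hat x)\boldsymbol c_n^m$, convergent for $|x|>R$ with vector coefficients $\boldsymbol c_n^m\in\mathbb C^3$, vanishes throughout $\{|x|>R\}$ if and only if every $\boldsymbol c_n^m=0$. The nontrivial direction is the forward one. I would fix a radius $r>R$, restrict $\boldsymbol F$ to the sphere $|x|=r$, and use the $L^2$-orthonormality of the spherical harmonics on the unit sphere to read off, componentwise, that $\mathrm{i}\kappa\, h_n^{(1)}(\kappa r)\,\boldsymbol c_n^m=\boldsymbol 0$ for every $n,m$. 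Since $|h_n^{(1)}(t)|^2=j_n(t)^2+y_n(t)^2$ and the Wronskian identity $j_n(t)y_n'(t)-j_n'(t)y_n(t)=t^{-2}$ prevents $j_n$ and $y_n$ from vanishing simultaneously, one has $h_n^{(1)}(\kappa r)\neq 0$ for $r>0$, whence $\boldsymbol c_n^m=\boldsymbol 0$. The reverse direction is immediate from the expansion, and the distinction between the open exterior and the set $\{|x|\ge R\}$ is harmless since $\boldsymbol E$ is smooth off the support of $\boldsymbol J$.

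With this in hand, part (i) follows at once: the representation \eqref{EINE} exhibits $\boldsymbol E$ outside $B_R$ precisely in the above form with $\boldsymbol c_n^m=\beta_n^m=\alpha_n^m+\gamma_n^m$, so the source is nonradiating exactly when all $\beta_n^m$ vanish. Here I would note that the expansion \eqref{EINE} is legitimate because the addition theorem \eqref{Gex} converges uniformly for $\hat y\in S^2$ and $|y|\le R<|x|$, which justifies interchanging the summation with the integration against $\boldsymbol J\in (L^2(B_R))^3$; the coefficients $\alpha_n^m$ and $\gamma_n^m$ are well defined since the (smooth) gradient, respectively Hessian, of $j_n(\kappa|y|)\overline{Y_n^m(\hat y)}$ is integrated against an $L^2$ field.

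For part (ii) the only additional ingredient is the identity $\gamma_n^m=\zeta_n^m$, already obtained in \eqref{EINE-11} by applying the divergence theorem to transfer one derivative off the test function onto $\boldsymbol J$. The point to check is that the boundary integral over $\partial B_R$ drops out: this is exactly the normal-trace condition $(\boldsymbol J\cdot\boldsymbol n)|_{\partial B_R}=0$ defining $H_0(\mathrm{div};B_R)$, while the membership $\nabla\cdot\boldsymbol J\in L^2(B_R)$ guarantees that the remaining volume integral converges. Consequently $\beta_n^m=\alpha_n^m+\zeta_n^m$, and part (i) yields the stated characterization.

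For part (iii) I would switch to the scalar representation \eqref{EIN-gr1}, which is available precisely because $\nabla(\nabla\cdot\boldsymbol J)\in (L^2(B_R))^3$ for $\boldsymbol J\in H(\mathrm{div};\mathrm{grad};B_R)$. Inserting the addition theorem \eqref{Gex} for the scalar Green's function $g$ and integrating termwise against $\boldsymbol J+\kappa^{-2}\nabla(\nabla\cdot\boldsymbol J)$ casts $\boldsymbol E$, for $|x|>R$, into the core form with $\boldsymbol c_n^m=\alpha_n^m+\eta_n^m$, and the core claim gives nonradiation if and only if $\alpha_n^m+\eta_n^m=\boldsymbol 0$ for all $n,m$. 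The main obstacle throughout is not any single computation but the rigorous extraction of coefficients, namely justifying the termwise integration of the addition-theorem series and the orthogonality projection on each sphere, together with the vanishing of the boundary term in (ii); these are exactly the places where the regularity hypotheses $(L^2(B_R))^3$, $H_0(\mathrm{div};B_R)$, and $H(\mathrm{div};\mathrm{grad};B_R)$ are used.
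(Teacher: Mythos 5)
Your proposal is correct and follows essentially the same route as the paper: expand $\boldsymbol E$ outside $B_R$ via the addition theorem to read off the coefficients $\beta_n^m$ (equal to $\alpha_n^m+\gamma_n^m$, $\alpha_n^m+\zeta_n^m$, or $\alpha_n^m+\eta_n^m$ in the three regularity settings, using the divergence theorem with the vanishing normal trace for (ii) and the scalar representation \eqref{EIN-gr1} for (iii)). The only difference is that you make explicit, via orthonormality of the $Y_n^m$ on spheres $|x|=r>R$ and the non-vanishing of $h_n^{(1)}(\kappa r)$ from the Wronskian identity, the step "exterior field vanishes iff all coefficients vanish," which the paper leaves implicit; this is a welcome addition rather than a deviation.
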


It is worth mentioning that if the source $\boldsymbol J$ is divergence free, i.e., $\nabla\cdot{\boldsymbol  J}=0$, then we deduce from \eqref{EINE-11} that $\zeta_n^m=\eta_n^m=0$ and $\beta_{n}^{m}=\alpha_n^{m}$. 

\begin{proposition}\label{The-alph} 
Let $\boldsymbol{J}$ be a source for the Maxwell equation \eqref{Eqal-2}. If $\nabla\cdot{\boldsymbol  J}=0$, then
the source $\boldsymbol{J}\in (L^2(B_R))^3$ is nonradiating if and only if $\beta_n^{m}=\alpha_n^{m}=0$ for all $n = 0,1,\dots, m = -n, \dots, n$.
\end{proposition}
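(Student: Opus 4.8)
The plan is to derive this statement directly from part (i) of Theorem \ref{The-123} by showing that the divergence-free hypothesis forces the correction term $\gamma_n^m$ in \eqref{EINE-0} to vanish, so that $\beta_n^m$ collapses to $\alpha_n^m$. In other words, under $\nabla\cdot\boldsymbol J=0$ the characterization ``$\beta_n^m=\alpha_n^m+\gamma_n^m=0$'' from the general $(L^2)^3$ case reduces to ``$\alpha_n^m=0$'', which is precisely the asserted equivalence.

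First I would unwind the definition \eqref{gamma} componentwise. Writing $\phi_n^m(y)=j_n(\kappa|y|)\overline{Y_n^m(\hat y)}$, which is a smooth (indeed entire) scalar solution of the Helmholtz equation, the $l$-th component of $\gamma_n^m$ is $\kappa^{-2}\int_{B_R}\sum_s(\partial_l\partial_s\phi_n^m)J_s\,{\rm d}y=\kappa^{-2}\int_{B_R}\nabla(\partial_l\phi_n^m)\cdot\boldsymbol J\,{\rm d}y$, where all derivatives fall on the smooth factor $\phi_n^m$. Thus the integral is well defined for merely $\boldsymbol J\in(L^2(B_R))^3$, without any regularity assumed on $\boldsymbol J$ itself.

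Next I would invoke the divergence-free condition. Since $\boldsymbol J$ has compact support $\overline D\subset B_R$, the condition $\nabla\cdot\boldsymbol J=0$ is understood distributionally, i.e.\ $\int_{B_R}\boldsymbol J\cdot\nabla\psi\,{\rm d}y=0$ for every $\psi\in C^\infty$; choosing a cutoff equal to $1$ on a neighborhood of $\overline D$ and supported in $B_R$ allows me to take $\psi=\partial_l\phi_n^m$ despite its non-compact support, whence each component of $\gamma_n^m$ vanishes. Equivalently, this is the $(L^2)^3$ version of the integration by parts already recorded in \eqref{EINE-11}: the interior term carries the factor $\nabla\cdot\boldsymbol J=0$, and the boundary term over $\partial B_R$ drops because $\boldsymbol J\cdot\boldsymbol n=0$ there. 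Hence $\gamma_n^m=0$ and $\beta_n^m=\alpha_n^m$ for all $n,m$.

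Finally, applying Theorem \ref{The-123}(i), the source $\boldsymbol J$ is nonradiating if and only if $\beta_n^m=0$ for all admissible $n,m$, which under the vanishing of $\gamma_n^m$ is exactly $\alpha_n^m=0$ for all such $n,m$, completing the argument. The only genuinely delicate point is the justification of $\gamma_n^m=0$: one must confirm both that the distributional reading of $\nabla\cdot\boldsymbol J=0$ may legitimately be paired with the smooth but non-compactly-supported test function $\partial_l\phi_n^m$ (handled by the cutoff together with the compact support of $\boldsymbol J$) and that no boundary contribution survives. Everything else is an immediate specialization of the general characterization.
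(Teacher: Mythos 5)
Your proposal is correct and follows essentially the same route as the paper: the paper likewise notes that when $\nabla\cdot\boldsymbol J=0$ the integration-by-parts identity \eqref{EINE-11} forces the correction terms to vanish, so $\beta_n^m=\alpha_n^m$, and then invokes Theorem \ref{The-123}. Your added care with the distributional reading of $\nabla\cdot\boldsymbol J=0$ and the cutoff justifying the pairing with the non-compactly-supported test function $\partial_l\phi_n^m$ is a welcome tightening of the same argument, not a different one.
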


\subsection{Far-field patterns}

This section is to investigate the characterization of nonradiating sources by utilizing the far-field patterns of the electric field. 

For $y\in B_R$ and $|x|\to\infty$, taking into account the asymptotic expansions
\begin{align}\label{aex-y}
|x-y| = |x|- {\hat x} \cdot y+O\left(\frac{1}{|{x}|}\right)
\end{align}
and
\begin{align}\label{aexy-1}
\frac{1}{|x- y|}=\frac{1}{|x|}\bigg[1+O\left(\frac{1}{|{x}|}\right)\bigg],
\end{align}
we derive from \eqref{aex-y}--\eqref{aexy-1} that
\begin{align}\label{gase}
g(x, y)=\frac{\exp{(\mathrm{i}\kappa|x-y|)}}{4\pi|x-y|}=\frac{e^{\mathrm{i}\kappa|x|}}{4\pi|x|}\bigg[e^{-\mathrm{i}\kappa {\hat x} \cdot y}+O\bigg(\frac{1}{| {x}| }\bigg)\bigg]
\end{align}
and
\begin{align*}
G_{ls}(x, y)&=\bigg[\delta_{ls}-\frac{(x_{l}-y_{l})(x_{s}-y_{s})}{|x-y|^2}+\frac{(\mathrm{i}\kappa|x-y|-1)}{\kappa^2|x-y|^2}\bigg(\delta_{ls}-\frac{3(x_{l}-y_{l})(x_{s}-y_{s})}{|x- y|^2}\bigg) \bigg]\times\nonumber\\
&\quad\quad
\frac{e^{{\rm i}\kappa|x|}}{4\pi|x|}\bigg[e^{-\mathrm{i}\kappa {\hat x} \cdot y}+O\bigg(\frac{1}{| {x}|}\bigg)\bigg]\nonumber\\
&=\bigg[\delta_{ls}-\frac{(x_{l}-y_{l})(x_{s}-y_{s})}{|x-y|^2}+O\bigg(\frac{1}{|{x}|}\bigg)\bigg(\delta_{ls}-\frac{3(x_{l}-y_{l})(x_{s}-y_{s})}{|x- y|^2}\bigg) \bigg]\times\nonumber\\
&\quad\quad
\frac{e^{{\rm i}\kappa|x|}}{4\pi|x|}\bigg[e^{-\mathrm{i}\kappa {\hat x} \cdot y}+O\bigg(\frac{1}{| {x}|}\bigg)\bigg]\nonumber\\
&=\frac{e^{{\rm i}\kappa|x|}}{4\pi|x|}\bigg[\tilde{G}_{l,s}(x, y)e^{-\mathrm{i}\kappa {\hat x} \cdot y}+O\bigg(\frac{1}{| {x}|}\bigg)\bigg],\quad l,s=1,2,3.
\end{align*}
Consequently, we have
\begin{align}\label{GMaxaeM}
\boldsymbol{G}(x, y)
=\frac{e^{{\rm i}\kappa|x|}}{4\pi|x|}\bigg[\tilde{\boldsymbol G}(x, y)
e^{-{\rm i}\kappa {\hat x} \cdot y} +O\bigg(\frac{1}{| {x}| }\bigg)
\bigg],\quad |x|\to \infty,\ y\in B_R,
\end{align}
where
\begin{align*}
\tilde{\boldsymbol G}(x, y)
=\begin{pmatrix}
1-\frac{(x_{1}-y_1)^2}{|x- y|^2} & -\frac{(x_{1}-y_1)(x_{2}-y_2)}{|x-y|^2} & -\frac{(x_{1}-y_1)(x_{3}-y_3)}{| x- y|^2}
\\[5pt]
-\frac{(x_{1}-y_1)(x_{2}-y_2)}{|x- y|^2}& 1-\frac{(x_{2}-y_2)^2}{|x- y|^2} & -\frac{(x_{2}-y_2)(x_{3}-y_3)}{| x-y|^2}
\\[5pt]
-\frac{(x_{1}-y_1)(x_{3}-y_3)}{|x- y|^2} & -\frac{(x_{2}-y_2)(x_{3}-y_3)}{|x- y|^2} & 1-\frac{(x_{3}-y_3)^2}{|x-y|^2}
\end{pmatrix}.
\end{align*}

For $\boldsymbol{J}\in (L^2(B_R))^3$, it follows from \eqref{EIN} and \eqref{GMaxaeM} that 
\begin{align}\label{EIMax}
\boldsymbol  E (x)
=\frac{e^{{\rm i}\kappa|x|}}{4\pi|x|}\bigg[\int_{B_R}e^{-{\rm i}\kappa {\hat x} \cdot y} (\tilde{\boldsymbol G}(x, y)
\boldsymbol{J}(y)){\rm d}y+O\bigg(\frac{1}{|x|}\bigg)\bigg],\quad|x|\to \infty.
\end{align}
For $l,s=1,2,3$, straightforward calculations yield
\begin{align}\label{aexy-2}
&\frac{(x_{l}-y_{l})(x_{s}-y_{s})}{|x-y|^2}=\frac{(x_l- y_l)(x_s-y_s)}{|x|^2\bigg[1-2\bigg(\frac{{\hat x} \cdot y}{|x|}-\frac{|y|^2}{2|x|^2}\bigg)\bigg]}\nonumber\\
&=\bigg({\hat x}_l-\frac{y_l}{|x|}\bigg)\bigg({\hat x}_s-\frac{y_s}{|x|}\bigg)\bigg[1-2\bigg({\hat x} \cdot\frac{y}{|x|}-\frac{1}{2}\bigg|\frac{y}{|x|}\bigg|^2\bigg)\bigg]^{-1}\nonumber\\
&={\hat x}_l {\hat x}_s +O\bigg(\frac{1}{|x|}\bigg),\quad|x|\to \infty. 
\end{align}
Combining \eqref{Easb} and \eqref{EIMax}--\eqref{aexy-2} leads to 
\begin{align}\label{Emax-inf}
{\boldsymbol  E}_{\infty} ({\hat x})
=\int_{B_R} e^{-{\rm i}\kappa {\hat x} \cdot y} \big(\hat{\boldsymbol G}(\hat x)\boldsymbol{J}(y)\big){\rm d}y
=\hat{\boldsymbol G}(\hat x)\int_{B_R} e^{-{\rm i}\kappa {\hat x} \cdot y} \boldsymbol{J}(y){\rm d}y,
\end{align}
where $\hat x=x/|x|=(\hat x_1, \hat x_2, \hat x_3)$ is a unit vector, and
\begin{align}\label{hGtG}
\hat{\boldsymbol G}(\hat x)
=\begin{pmatrix}
1-\frac{x_{1}^2}{|x|^2} & -\frac{x_{1}x_{2}}{|x|^2} & -\frac{x_{1}x_{3}}{|x|^2}\\
-\frac{x_{1}x_{2}}{|x|^2}& 1-\frac{x_{2}^2}{|x|^2} & -\frac{x_{2}x_{3}}{|x|^2} \\
-\frac{x_{1}x_{3}}{|x|^2} & -\frac{x_{2}x_{3}}{|x|^2} & 1-\frac{x_{3}^2}{|x|^2}
\end{pmatrix}
=\begin{pmatrix}
1- \hat x_1^2& -\hat x_1 \hat x_2 & -\hat x_1 \hat x_3\\
-\hat x_1 \hat x_2 & 1- \hat x_2^2 & -\hat x_2 \hat x_3 \\
-\hat x_1 \hat x_3 & -\hat x_2\hat x_3 & 1-\hat x_3^2
\end{pmatrix}.
\end{align}

Recall the Jacobi--Anger expansion for the plane wave (cf. \cite[Theorem 2.8 and $(2.46)$]{DR-2013}):
\begin{align*}
e^{{\rm i}\kappa{x}\cdot{\hat d}}=4\pi\sum_{n=0}^{+\infty}
\sum_{m=-n}^{n} {{\rm i}}^{n}j_{n}(\kappa|x|)Y_n^m({\hat x})\overline{Y_n^m({\hat  d})},\quad x\in\mathbb{R}^3,
\end{align*}
where $\hat{d}\in\mathbb R^3$ is a unit propagation direction vector. Then, for $\xi\in\mathbb{R}^3$, we have
\begin{align}\label{plJAmax-1}
e^{-{\rm i}{\xi}\cdot{x}}
=e^{-{\rm i}{|\xi|{\hat{\xi}}}\cdot{x}}
=e^{-{\rm i}{|\xi|{x}\cdot{\hat{\xi}}}}
=4\pi\sum_{n=0}^{+\infty}
\sum_{m=-n}^{n}
(-{{\rm i}})^{n}j_{n}(|{\xi}||x|)\overline{Y_n^m({\hat x})}Y_n^m({\hat\xi}),\quad x\in\mathbb{R}^3,
\end{align}
where $\hat{\xi}=\xi/|\xi|$.  Using \eqref{Emax-inf}--\eqref{plJAmax-1}, we obtain
\begin{align}\label{plJAmax-2}
\boldsymbol {E}_{\infty} ({\hat x})
&=\hat{\boldsymbol G}(\hat x)\int_{B_R} e^{-{\rm i}\kappa {\hat x} \cdot y} \boldsymbol{J}(y){\rm d}y
\nonumber\\
&=\hat{\boldsymbol G}(\hat x)\int_{B_{R}}\left(4\pi\sum_{n=0}^{+\infty}
\sum_{m=-n}^{n} (-{{\rm i}})^{n}j_{n}(\kappa|y|)\overline{Y_n^m({\hat y})}Y_n^m({\hat x})\right)\boldsymbol{J}(y){\rm d}y\nonumber\\
&=4\pi\sum_{n=0}^{+\infty}
\sum_{m=-n}^{n}(-{\rm i})^{n}\big(\hat{\boldsymbol G}(\hat x)\alpha_n^m\big) Y_n^m({\hat x}), 
\end{align}
where $\alpha_n^m$ is defined in \eqref{alpha}.

\begin{lemma}\label{lemm-tp}
For any $|x|>0$, $\hat{\boldsymbol G}(\hat x)\alpha_n^m=0$ if and only if $\alpha_n^{m}=0$ for all $n = 0,1,\dots, m = -n, \dots, n$.
\end{lemma}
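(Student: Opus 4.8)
The plan is to recognize the matrix $\hat{\boldsymbol G}(\hat x)$ as a rank-two orthogonal projection and then exploit the freedom to vary the observation direction $\hat x$ over the unit sphere. Writing $\hat x=(\hat x_1,\hat x_2,\hat x_3)$ with $|\hat x|=1$, one reads off directly from \eqref{hGtG} that
\[
\hat{\boldsymbol G}(\hat x)=\boldsymbol I-\hat x\,\hat x^{\top},
\]
so that for any $\boldsymbol v\in\mathbb C^3$ we have $\hat{\boldsymbol G}(\hat x)\boldsymbol v=\boldsymbol v-(\hat x\cdot\boldsymbol v)\,\hat x$. Since $|\hat x|=1$, this is precisely the orthogonal projection of $\boldsymbol v$ onto the plane orthogonal to $\hat x$, whose kernel is exactly the line $\mathrm{span}\{\hat x\}$. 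This geometric identification is the only structural fact the argument needs.

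The forward implication is immediate: if $\alpha_n^m=0$ then $\hat{\boldsymbol G}(\hat x)\alpha_n^m=0$ for every direction by linearity. For the converse I would fix an index pair $(n,m)$, set $\boldsymbol v=\alpha_n^m$, and use the hypothesis that $\hat{\boldsymbol G}(\hat x)\boldsymbol v=0$ holds for \emph{every} admissible $\hat x$ (this is how I read ``for any $|x|>0$'', the condition being imposed over all directions rather than a single fixed one). By the identification above, this says $\boldsymbol v=(\hat x\cdot\boldsymbol v)\,\hat x$ for every $\hat x$, i.e.\ $\boldsymbol v$ lies in the one-dimensional kernel $\mathrm{span}\{\hat x\}$ for all directions simultaneously.

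To conclude $\boldsymbol v=0$ it then suffices to test against linearly independent directions. Taking $\hat x=\boldsymbol e_1,\boldsymbol e_2,\boldsymbol e_3$, the coordinate unit vectors, the relation $\boldsymbol v\parallel\boldsymbol e_j$ for each $j$ forces $\boldsymbol v=0$; two non-collinear choices already suffice. Because $\boldsymbol v=\alpha_n^m$ may be complex, I would split $\boldsymbol v=\boldsymbol a+\mathrm i\boldsymbol b$ with $\boldsymbol a,\boldsymbol b\in\mathbb R^3$, and note that since $\hat x$ is real the vanishing of $\hat{\boldsymbol G}(\hat x)\boldsymbol v$ is equivalent to the simultaneous vanishing of $\hat{\boldsymbol G}(\hat x)\boldsymbol a$ and $\hat{\boldsymbol G}(\hat x)\boldsymbol b$, so the real argument applies to each part.

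There is no genuine obstacle: the statement reduces to the elementary linear-algebra fact that the zero vector is the only element lying in the kernel of the projection $\boldsymbol I-\hat x\,\hat x^{\top}$ for every direction $\hat x$. The one point meriting explicit care is the quantifier structure — the equivalence holds precisely because $\hat{\boldsymbol G}(\hat x)\alpha_n^m=0$ is assumed for all $\hat x$ and not merely for one, for a single direction would only yield $\alpha_n^m\parallel\hat x$. I would state this, together with the real/imaginary reduction, to make the argument airtight.
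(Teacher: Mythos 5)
Your proof is correct and follows essentially the same route as the paper: both arguments rest on the hypothesis holding for all directions $\hat x$ and reduce the converse to testing two non-collinear directions, whose kernels $\mathrm{span}\{\hat x\}$ intersect trivially. The only difference is cosmetic — you identify $\hat{\boldsymbol G}(\hat x)=\boldsymbol I-\hat x\,\hat x^{\top}$ as a projection and argue geometrically, whereas the paper writes out the two matrices $\hat{\boldsymbol G}(0,0)$ and $\hat{\boldsymbol G}(\tfrac{\pi}{4},\tfrac{\pi}{4})$ explicitly and checks their common kernel directly.
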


\begin{proof}
In the spherical coordinates, we have 
\begin{align*}
 \hat x_1=\sin\theta\cos\varphi,\quad \hat x_2=\sin\theta\sin\varphi,\quad \hat x_3=\cos\theta,
\end{align*}
where $\theta\in[0,\pi], \varphi\in[0,2\pi]$. The matrix $\hat{\boldsymbol G}(\hat x)$ in \eqref{hGtG} can be written in the form $\hat{\boldsymbol G}(\hat x):=\hat{\boldsymbol G}(\theta,\varphi)$. For $(\theta, \varphi)=(0, 0)$ and $(\theta, \varphi)=(\frac{\pi}{4}, \frac{\pi}{4})$, the matrices are defined as 
\begin{align*}
\hat{\boldsymbol G}(0,0)
=\begin{pmatrix}
1& 0 &0\\
0& 1 &0\\
0& 0 &0
\end{pmatrix},\quad
\hat{\boldsymbol G}\bigg(\frac{\pi}{4},\frac{\pi}{4}\bigg)
=\begin{pmatrix}
\frac{3}{4}& -\frac{1}{4} &-\frac{\sqrt{2}}{4}\vspace{0.2cm}\\
-\frac{1}{4}&  \frac{3}{4}&-\frac{\sqrt{2}}{4}\vspace{0.2cm}\\
-\frac{\sqrt{2}}{4}& -\frac{\sqrt{2}}{4} &\frac{1}{2} 
\end{pmatrix}.
\end{align*}
It can be easily verified from $\hat{\boldsymbol G}(0,0)\alpha_n^m=\hat{\boldsymbol G}(\frac{\pi}{4},\frac{\pi}{4})\alpha_n^m=0$ that $\alpha_n^m=0$  for all $n = 0,1,\dots, m = -n, \dots, n$.  Conversely, the fact is evident.
\end{proof}

By applying Lemma \ref{lemm-tp}, we derive the following characterization of nonradiating sources based on the far-field patterns.

\begin{theorem}\label{T-FFP-1} 
For all $n = 0,1,\dots, m = -n, \dots, n$, if $\gamma_n^{m}=0$. Then, the source $\boldsymbol{J}\in (L^2(B_R))^3$ is nonradiating if and only if
its far-field pattern ${\boldsymbol E}_{\infty} ({\hat x})$ in \eqref{plJAmax-2}  is equal to zero.
\end{theorem}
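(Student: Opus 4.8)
The plan is to convert the statement into the purely algebraic condition $\alpha_n^m=0$ and then read that condition off from the series \eqref{plJAmax-2}. Since $\gamma_n^m=0$ for all $n,m$ by hypothesis, part (i) of Theorem \ref{The-123} asserts that $\boldsymbol J\in(L^2(B_R))^3$ is nonradiating exactly when $\beta_n^m=\alpha_n^m+\gamma_n^m=\alpha_n^m=0$ for every $n=0,1,\dots$ and $m=-n,\dots,n$. Hence it suffices to show that, under the same hypothesis, the far-field pattern in \eqref{plJAmax-2} vanishes identically if and only if $\alpha_n^m=0$ for all $n,m$; combining this equivalence with the reduction just made settles both implications at once.

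One direction is immediate: if all $\alpha_n^m=0$, then every summand of \eqref{plJAmax-2} is zero and ${\boldsymbol E}_\infty\equiv 0$. For the converse I would first pull the direction-dependent matrix out of the sum. Because $\hat{\boldsymbol G}(\hat x)$ does not depend on the summation indices, \eqref{plJAmax-2} can be rewritten as ${\boldsymbol E}_\infty(\hat x)=4\pi\,\hat{\boldsymbol G}(\hat x)\boldsymbol F(\hat x)$, where $\boldsymbol F(\hat x):=\sum_{n=0}^{\infty}\sum_{m=-n}^{n}(-{\rm i})^n\alpha_n^m Y_n^m(\hat x)$. The assumption ${\boldsymbol E}_\infty\equiv 0$ then forces $\hat{\boldsymbol G}(\hat x)\boldsymbol F(\hat x)=0$ for every unit vector $\hat x$.

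The main obstacle is to pass from this to $\alpha_n^m=0$, and it cannot be done by a naive comparison of coefficients. Indeed, from \eqref{hGtG} the matrix $\hat{\boldsymbol G}(\hat x)=\boldsymbol I-\hat x\,\hat x^{\top}$ is the orthogonal projection onto the plane perpendicular to $\hat x$: its entries mix spherical harmonics of degrees $0$ and $2$, and it annihilates the component of $\boldsymbol F(\hat x)$ parallel to $\hat x$, so $\hat{\boldsymbol G}(\hat x)\boldsymbol F(\hat x)=0$ only controls the tangential part of $\boldsymbol F$. This is precisely the difficulty that Lemma \ref{lemm-tp} is designed to remove. To resolve it I would exploit the hypothesis $\gamma_n^m=0$, under which $\beta_n^m=\alpha_n^m$: computing the leading asymptotics of the near-field series \eqref{EINE} by means of $h_n^{(1)}(t)\sim(-{\rm i})^{n+1}e^{{\rm i}t}/t$ shows that ${\boldsymbol E}_\infty$ is a constant multiple of $\sum_{n,m}(-{\rm i})^n\beta_n^m Y_n^m(\hat x)$, which under $\gamma_n^m=0$ is exactly $\boldsymbol F$; since $\hat x\cdot\hat{\boldsymbol G}(\hat x)=0$ makes every electric far-field pattern tangential, we conclude $\hat x\cdot\boldsymbol F(\hat x)\equiv0$. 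Consequently $\hat{\boldsymbol G}(\hat x)\boldsymbol F(\hat x)=\boldsymbol F(\hat x)$, the relation above gives $\boldsymbol F\equiv0$, and the orthonormality of $\{Y_n^m\}$ yields $\alpha_n^m=0$ for all $n,m$. Lemma \ref{lemm-tp}, tested at the two directions $(\theta,\varphi)=(0,0)$ and $(\tfrac{\pi}{4},\tfrac{\pi}{4})$, then certifies that no spurious radial contribution survives, completing the converse and hence the theorem.
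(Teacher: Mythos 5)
Your proof is correct, but it reaches the conclusion by a genuinely different route than the paper. The paper's argument is the chain: nonradiating $\Leftrightarrow$ $\alpha_n^m=0$ (Theorem~\ref{The-123}(i) with $\gamma_n^m=0$) $\Leftrightarrow$ $\hat{\boldsymbol G}(\hat x)\alpha_n^m=0$ (Lemma~\ref{lemm-tp}) $\Leftrightarrow$ ${\boldsymbol E}_\infty=0$, reading the last equivalence off the representation \eqref{plJAmax-2}; the decisive tool is Lemma~\ref{lemm-tp}, which tests the projector $\hat{\boldsymbol G}$ at two directions. You instead bypass Lemma~\ref{lemm-tp}: you produce a second, genuinely orthogonal expansion ${\boldsymbol E}_\infty(\hat x)=c\sum_{n,m}(-{\rm i})^n\beta_n^m Y_n^m(\hat x)$ from the near-field series \eqref{EINE} via $h_n^{(1)}(t)\sim(-{\rm i})^{n+1}e^{{\rm i}t}/t$, note that $\beta_n^m=\alpha_n^m$ under the hypothesis, and conclude by orthonormality of the $Y_n^m$. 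This buys something real: as you observe, the converse cannot be read off \eqref{plJAmax-2} by naive coefficient comparison, because the entries of $\hat{\boldsymbol G}(\hat x)=\boldsymbol I-\hat x\,\hat x^{\top}$ are degree-two polynomials in $\hat x$ and couple harmonic degrees $n-2$, $n$, $n+2$; the paper's one-line derivation leaves that decoupling implicit, whereas your alternative expansion makes the orthogonality step legitimate. Two minor points: the detour through tangentiality ($\hat x\cdot\boldsymbol F\equiv 0$, hence $\hat{\boldsymbol G}\boldsymbol F=\boldsymbol F$) is redundant, since once ${\boldsymbol E}_\infty$ is identified as a nonzero constant multiple of $\boldsymbol F$ its vanishing already forces $\boldsymbol F\equiv 0$; and the closing appeal to Lemma~\ref{lemm-tp} is decorative, as $\alpha_n^m=0$ is already established by then. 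The termwise passage to the limit $|x|\to\infty$ in \eqref{EINE} deserves a word of justification, but it is of the same standard kind the paper itself employs.
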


In the scenario where $\boldsymbol{J}\in H(\mathrm{div}; \mathrm{grad}; B_R)$, define 
\[
\boldsymbol {\mathcal{J}}(x)=\boldsymbol J (x)+ \kappa^{-2}\nabla(\nabla\cdot{\boldsymbol J} (x)).
\]
Subsequently, we derive an alternative characterization of nonradiating sources for Maxwell's equations.  

\begin{theorem}\label{The-FE}
The source $\boldsymbol{J}\in H(\mathrm{div}; \mathrm{grad}; B_R)$ is nonradiating if and only if 
$\boldsymbol {{\mathcal{\hat J}}}=0$ when $|\xi|=\kappa$, where $\boldsymbol {{\mathcal{\hat J}}}$ represents the Fourier transform of $\boldsymbol{\mathcal J}$. 
\end{theorem}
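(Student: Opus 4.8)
The plan is to exploit the scalar volume-potential representation \eqref{EIN-gr1}, which reduces the vector Maxwell problem to a componentwise Helmholtz problem, and then to combine the identification of the far-field pattern with the Fourier transform of $\boldsymbol{\mathcal J}$ on the sphere $|\xi|=\kappa$ with Rellich's lemma. Since $\boldsymbol J\in H(\mathrm{div};\mathrm{grad};B_R)$, the field is
\[
\boldsymbol E(x)=\int_{B_R} g(x,y)\boldsymbol{\mathcal J}(y)\,{\rm d}y,
\]
and because $g$ is a scalar kernel, each Cartesian component $E_k$ is the Helmholtz volume potential of $\mathcal J_k$. By \eqref{Eqal-2-3} it solves $\Delta E_k+\kappa^2 E_k=-\mathcal J_k$, and as it is generated by the outgoing kernel $g$ it is a radiating solution of the homogeneous Helmholtz equation in $\mathbb R^3\setminus\overline{B_R}$, where $\boldsymbol{\mathcal J}$ vanishes.

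First I would compute the far-field pattern. Substituting the plane-wave asymptotics \eqref{gase} of $g$ into the representation above and matching with \eqref{Easb} gives, componentwise,
\[
\boldsymbol E_\infty(\hat x)=\frac{1}{4\pi}\int_{B_R} e^{-{\rm i}\kappa\hat x\cdot y}\boldsymbol{\mathcal J}(y)\,{\rm d}y=\frac{1}{4\pi}\hat{\boldsymbol{\mathcal J}}(\kappa\hat x).
\]
Thus the far-field pattern is exactly the Fourier transform of $\boldsymbol{\mathcal J}$ restricted to the sphere $|\xi|=\kappa$, so that $\boldsymbol E_\infty\equiv 0$ on the unit sphere is equivalent to $\hat{\boldsymbol{\mathcal J}}=0$ on $\{|\xi|=\kappa\}$.

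Next I would connect the far-field to Definition \ref{def-E}. If $\boldsymbol J$ is nonradiating, then $\boldsymbol E=0$ outside $B_R$ and hence $\boldsymbol E_\infty=0$ trivially. For the converse, each component $E_k$ is a radiating solution of the homogeneous Helmholtz equation in the exterior, so Rellich's lemma (cf.\ \cite{DR-2013}) forces $E_k=0$ there once $E_{k,\infty}=0$; applying this to every component gives $\boldsymbol E=0$ outside $B_R$. Combining the two directions yields the equivalence ``$\boldsymbol J$ nonradiating $\iff\boldsymbol E_\infty\equiv 0$'', which together with the Fourier identity of the previous paragraph proves the theorem.

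The main point requiring care is the verification that each scalar component is simultaneously a solution of the homogeneous Helmholtz equation and satisfies the Sommerfeld radiation condition in the exterior, so that Rellich's lemma applies; this follows from the decomposition \eqref{Eqal-2-3} and the fact that the outgoing volume potential with kernel $g$ is automatically a radiating solution, consistent with Lemma \ref{Lemm-Hrc}. Once this is in place, the remainder is a direct substitution of the asymptotics of $g$ together with the definition of the Fourier transform.
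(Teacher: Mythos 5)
Your proposal is correct, but it takes a genuinely different route from the paper. The paper expands $e^{-{\rm i}\xi\cdot x}$ by the Jacobi--Anger formula \eqref{plJAmax-1} to obtain $\boldsymbol{\hat{\mathcal J}}(\xi)=4\pi\sum_{n,m}(-{\rm i})^n(\alpha_n^m+\eta_n^m)Y_n^m(\hat\xi)$ on $|\xi|=\kappa$, and then concludes by the orthonormality of the spherical harmonics together with the coefficient characterization of Theorem \ref{The-123}(iii); the linear-independence/uniqueness input is thus hidden inside the expansion \eqref{EINE} underlying that theorem. You instead bypass the coefficients $\alpha_n^m,\eta_n^m$ entirely: from \eqref{EIN-gr1} and the asymptotics \eqref{gase} you identify $\boldsymbol E_\infty(\hat x)=\frac{1}{4\pi}\boldsymbol{\hat{\mathcal J}}(\kappa\hat x)$ (the paper's \eqref{plJA-2} drops the $\frac{1}{4\pi}$, an immaterial normalization), and you supply the uniqueness input explicitly via Rellich's lemma applied componentwise to the scalar Helmholtz equation \eqref{Eqal-2-3} in the exterior, where Lemma \ref{Lemm-Hrc} guarantees the radiation condition. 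Your argument is in effect a combined proof of Theorems \ref{The-FE} and \ref{The-pc} and is self-contained and arguably cleaner; what the paper's route buys is uniformity with the $\alpha_n^m+\eta_n^m$ framework that is reused in the null-space results and the explicit examples of Section \ref{S:ex}. The one point worth making explicit in your write-up is that Definition \ref{def-E} is phrased in terms of the dyadic representation \eqref{EIN}, so you should note (as the paper does around \eqref{EIN-gr1}) that \eqref{EIN} and \eqref{EIN-gr1} give the same radiating field for $\boldsymbol J\in H(\mathrm{div};\mathrm{grad};B_R)$ before working only with the scalar kernel.
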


\begin{proof}

Using \eqref{plJAmax-1}, we obtain
\begin{align*} 
\boldsymbol {{\mathcal{\hat J}}}(\xi)
&=\int_{B_{R}}\boldsymbol {\mathcal{J}}(x) e^{-{\rm i} \xi \cdot x}{\rm d}x
=\int_{B_{R}}\boldsymbol {\mathcal{J}}(x) \left(4\pi\sum_{n=0}^{+\infty}
\sum_{m=-n}^{n}
(-{{\rm i}})^{n}j_{n}(|{\xi}||x|)\overline{Y_n^m({\hat x})}Y_n^m({\hat\xi})\right){\rm d}x
\nonumber\\
&=4\pi\sum_{n=0}^{+\infty}
\sum_{m=-n}^{n}(-{\rm i})^{n}Y_n^m({\hat\xi})\int_{B_{R}} j_{n}(|{\xi}||x|)\overline{Y_n^m({\hat x})}\boldsymbol {\mathcal{J}}(x){\rm d}x\nonumber\\
&=4\pi\sum_{n=0}^{+\infty}
\sum_{m=-n}^{n}(-{\rm i})^{n}Y_n^m({\hat\xi})
\int_{0}^{R}j_{n}(|{\xi}|r)r^2\bigg(\int_{0}^{\pi}\int_{0}^{2\pi}{\boldsymbol {\mathcal{J}}}(r,\theta,\phi) \overline{Y_n^m(\theta,\phi)}\sin\theta{\rm d} \theta{\rm d}\phi\bigg){\rm d}r
\nonumber\\
&=4\pi\sum_{n=0}^{+\infty}\sum_{m=-n}^{n}(-{\rm i})^{n}Y_n^m({\hat\xi})\int_{0}^{R}{\boldsymbol{\mathcal{J}}}_n^{m}(r)j_{n}(|{\xi}|r)r^2{\rm d}r,
\end{align*}
which implies
\begin{align}\label{FJ-1}
\boldsymbol {{\mathcal{\hat J}}}({\xi})
=4\pi\sum_{n=0}^{+\infty}
\sum_{m=-n}^{n}(-{\rm i})^{n} (\alpha_n^{m}
+\eta_n^{m}) Y_n^m({\hat\xi}),\quad |\xi|=\kappa.
\end{align}
The proof is completed by utilizing Theorem \ref{The-123} and \eqref{FJ-1}.
\end{proof}

\begin{theorem}\label{The-pc}
The source $\boldsymbol{J}\in H(\mathrm{div}; \mathrm{grad}; B_R)$ is nonradiating if and only if $\boldsymbol {E}_{\infty} ({\hat x})= 0$.
\end{theorem}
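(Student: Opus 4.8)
The plan is to reduce the claim to the already-established Theorem \ref{The-FE} by showing that, for a source in $H(\mathrm{div}; \mathrm{grad}; B_R)$, the far-field pattern $\boldsymbol{E}_\infty$ coincides (up to a fixed nonzero constant) with the Fourier transform $\boldsymbol{\hat{\mathcal{J}}}$ evaluated on the sphere $|\xi|=\kappa$. Since $\boldsymbol{J}\in H(\mathrm{div};\mathrm{grad};B_R)$, the electric field admits the scalar Helmholtz representation \eqref{EIN-gr1}, namely $\boldsymbol{E}(x)=\int_{B_R}g(x,y)\boldsymbol{\mathcal{J}}(y)\,{\rm d}y$ with $\boldsymbol{\mathcal{J}}=\boldsymbol{J}+\kappa^{-2}\nabla(\nabla\cdot\boldsymbol{J})$. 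First I would substitute the far-field asymptotics \eqref{gase} of the scalar Green's function $g$ into this representation and compare the leading term with the definition \eqref{Easb} of $\boldsymbol{E}_\infty$. This produces
\begin{align*}
\boldsymbol{E}_\infty(\hat x)=\int_{B_R}e^{-{\rm i}\kappa\hat x\cdot y}\boldsymbol{\mathcal{J}}(y)\,{\rm d}y=\boldsymbol{\hat{\mathcal{J}}}(\kappa\hat x),
\end{align*}
so the far-field pattern is precisely the restriction of $\boldsymbol{\hat{\mathcal{J}}}$ to $|\xi|=\kappa$.

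With this identity, both implications follow at once. As $\hat x$ runs over the unit sphere, $\xi=\kappa\hat x$ runs over the entire sphere $|\xi|=\kappa$, so $\boldsymbol{E}_\infty\equiv 0$ if and only if $\boldsymbol{\hat{\mathcal{J}}}(\xi)=0$ for every $\xi$ with $|\xi|=\kappa$. By Theorem \ref{The-FE}, this last condition holds exactly when $\boldsymbol{J}$ is nonradiating, which closes the argument. As an equivalent route that exhibits the link with the earlier expansions, I would instead feed the Jacobi--Anger formula \eqref{plJAmax-1} into the displayed integral to obtain the series $\boldsymbol{E}_\infty(\hat x)=4\pi\sum_{n,m}(-{\rm i})^n(\alpha_n^m+\eta_n^m)Y_n^m(\hat x)$, which is exactly \eqref{FJ-1}; the orthonormality and completeness of $\{Y_n^m\}$ on the unit sphere then give $\boldsymbol{E}_\infty\equiv 0$ if and only if $\alpha_n^m+\eta_n^m=0$ for all $n,m$, which is the nonradiating criterion of Theorem \ref{The-123}(iii).

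The one delicate point is the first step, namely extracting $\boldsymbol{E}_\infty$ from the scalar representation \eqref{EIN-gr1} and confirming that it is the genuine Maxwell far-field. I would double-check consistency with the dyadic formula \eqref{Emax-inf}: using $\boldsymbol{\hat{\mathcal{J}}}(\xi)=\boldsymbol{\hat J}(\xi)-\kappa^{-2}\xi\,(\xi\cdot\boldsymbol{\hat J}(\xi))$, at $|\xi|=\kappa$ the radial part is annihilated, giving $\boldsymbol{\hat{\mathcal{J}}}(\kappa\hat x)=(\boldsymbol I-\hat x\hat x^{\top})\boldsymbol{\hat J}(\kappa\hat x)=\hat{\boldsymbol G}(\hat x)\boldsymbol{\hat J}(\kappa\hat x)$, which matches \eqref{Emax-inf} and confirms that $\boldsymbol{E}_\infty$ is tangential, as any radiating Maxwell far-field must be. This reconciliation is essentially bookkeeping once the Fourier transform of $\nabla(\nabla\cdot\boldsymbol J)$ is computed, so no new estimate is needed; the conclusion rests entirely on the representation \eqref{EIN-gr1}, the asymptotics \eqref{gase}, and Theorems \ref{The-123} and \ref{The-FE}.
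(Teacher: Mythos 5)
Your proposal is correct and follows essentially the same path as the paper: both start from the scalar representation \eqref{EIN-gr1}, insert the asymptotics \eqref{gase} to identify $\boldsymbol{E}_\infty(\hat x)=\int_{B_R}e^{-{\rm i}\kappa\hat x\cdot y}\boldsymbol{\mathcal J}(y)\,{\rm d}y$, and then reduce to an already-proved characterization --- the paper expands via Jacobi--Anger to get $4\pi\sum_{n,m}(-{\rm i})^n(\alpha_n^m+\eta_n^m)Y_n^m(\hat x)$ and invokes Theorem \ref{The-123}(iii), which is exactly your stated ``equivalent route.'' Your primary reduction through Theorem \ref{The-FE} via $\boldsymbol{E}_\infty(\hat x)=\boldsymbol{\hat{\mathcal J}}(\kappa\hat x)$ is only cosmetically different, since the paper's proof of Theorem \ref{The-FE} rests on the same series \eqref{FJ-1}; your consistency check against \eqref{Emax-inf} is a nice addition but not needed for the argument.
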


\begin{proof}
From \eqref{EIN-gr1} and  \eqref{gase}, for $\boldsymbol{J}\in H(\mathrm{div}; \mathrm{grad}; B_R)$, we can derive
\begin{align}\label{EIN-inf-1}
\boldsymbol  E (x)
&=\int_{B_R} g(x, y) \big(\boldsymbol  J (y)+ \kappa^{-2}\nabla(\nabla\cdot{\boldsymbol  J}(y))\big){\rm d}y\nonumber\\
&=\frac{e^{\mathrm{i}\kappa|x|}}{4\pi|x|}\int_{B_R}\bigg[e^{-\mathrm{i}\kappa {\hat x} \cdot y}+O\bigg(\frac{1}{| {x}| }\bigg)\bigg] \big(\boldsymbol  J (y)+ \kappa^{-2}\nabla(\nabla\cdot{\boldsymbol  J}(y))\big){\rm d}y\nonumber\\
&=\frac{e^{\mathrm{i}\kappa|x|}}{4\pi|x|}\bigg[\int_{B_R}e^{-\mathrm{i}\kappa {\hat x} \cdot y}\big(\boldsymbol  J (y){\rm d} y+ \kappa^{-2}\nabla(\nabla\cdot{\boldsymbol  J}(y))\big)+O\bigg(\frac{1}{| {x}| }\bigg)\bigg] ,\quad \  |x|\to \infty.
\end{align}
It follows from \eqref{Easb}, \eqref{plJAmax-1}, and \eqref{EIN-inf-1} that
\begin{align}\label{plJA-2}
\boldsymbol {E}_{\infty} ({\hat x})&=\int_{B_R}e^{-{\rm i}\kappa {\hat x} \cdot y}\big(\boldsymbol  J (y)+ \kappa^{-2}\nabla(\nabla\cdot{\boldsymbol  J} (y))\big){\rm d} y\nonumber\\
&=4\pi\sum_{n=0}^{+\infty} \sum_{m=-n}^{n}(-{\rm i})^{n}Y_n^m({\hat x})
\int_{B_{R}} j_{n}(\kappa|y|)\overline{Y_n^m({\hat y})}\big(\boldsymbol  J (y)+ \kappa^{-2}\nabla(\nabla\cdot{\boldsymbol  J} (y))\big){\rm d}y.
\end{align}
Noting
\begin{align}\label{plJA-3}
&\int_{B_{R}} j_{n}(\kappa|y|)\overline{Y_n^m({\hat y})}\big(\boldsymbol  J (y)+\kappa^{-2}\nabla(\nabla\cdot{\boldsymbol  J} (y))\big){\rm d}y=\alpha_n^{m}+\eta_n^{m},
\end{align}
and combining \eqref{plJA-2} and \eqref{plJA-3}, we obtain
\begin{align}\label{plJA-4}
\boldsymbol {E}_{\infty} ({\hat x})=4\pi\sum_{n=0}^{+\infty}
\sum_{m=-n}^{n}(-{\rm i})^{n}(\alpha_n^{m}
+\eta_n^{m})Y_n^m({\hat x}). 
\end{align}
The proof is completed by applying Theorem \ref{The-123} and \eqref{plJA-4}.
\end{proof}

\subsection{Near-field data}

In this section, we explore the characterization of nonradiating source by using the near-field data, which involves the electric field $\boldsymbol E$ on $\partial B_R$.

Define 
\begin{align}\label{FTE0max-5}
 \boldsymbol{ U}(\xi)& =\int_{\partial B_R}\big[ \boldsymbol  n\times (\nabla\times\boldsymbol{E}(y))-\mathrm{i}{\xi}\times\big(\boldsymbol  n\times\big( \boldsymbol{E}(y)
 + \kappa^{-2}\nabla(\nabla\cdot{\boldsymbol  E}(y))\big)\big)\big]e^{-{\rm i} \xi \cdot y}{\rm d} s_y,\quad  |\xi|=\kappa.
\end{align}

\begin{theorem}\label{Themax-NDu}
The source $\boldsymbol{J}\in H(\mathrm{div}; \mathrm{grad}; B_R)$ is nonradiating if and only if   $\boldsymbol{ U}(\xi)=0$ for $|\xi|=\kappa$.
\end{theorem}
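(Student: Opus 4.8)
The plan is to establish the equivalence by connecting the boundary integral $\boldsymbol{U}(\xi)$ to the Fourier transform $\boldsymbol{\hat{\mathcal{J}}}(\xi)$ and then invoking Theorem \ref{The-FE}. First I would start from the representation \eqref{EIN-gr1}, which expresses $\boldsymbol{E}$ as the Helmholtz volume potential of the effective source $\boldsymbol{\mathcal{J}} = \boldsymbol{J} + \kappa^{-2}\nabla(\nabla\cdot\boldsymbol{J})$. Since $\boldsymbol{\mathcal{J}}$ is supported in $B_R$, inside the ball the field satisfies $\Delta\boldsymbol{E} + \kappa^2\boldsymbol{E} = -\boldsymbol{\mathcal{J}}$, while outside we have the homogeneous Helmholtz equation. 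The key observation is that the plane wave $e^{-\mathrm{i}\xi\cdot x}$ with $|\xi| = \kappa$ is itself a (vector-valued) solution of the homogeneous Helmholtz equation, so multiplying the inhomogeneous equation by $e^{-\mathrm{i}\xi\cdot x}$ componentwise and integrating over $B_R$ should, via Green's second identity, convert the volume integral $\int_{B_R}\boldsymbol{\mathcal{J}}(y)e^{-\mathrm{i}\xi\cdot y}\,\mathrm{d}y = \boldsymbol{\hat{\mathcal{J}}}(\xi)$ into a boundary integral over $\partial B_R$.

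Carrying this out, I would apply the scalar Green's identity componentwise: for each component $E_j$, $\int_{B_R}(E_j\,\Delta e^{-\mathrm{i}\xi\cdot y} - e^{-\mathrm{i}\xi\cdot y}\Delta E_j)\,\mathrm{d}y = \int_{\partial B_R}(E_j\,\partial_n e^{-\mathrm{i}\xi\cdot y} - e^{-\mathrm{i}\xi\cdot y}\partial_n E_j)\,\mathrm{d}s_y$. Because $\Delta e^{-\mathrm{i}\xi\cdot y} = -\kappa^2 e^{-\mathrm{i}\xi\cdot y}$ and $\Delta E_j = -\kappa^2 E_j - \mathcal{J}_j$, the volume side collapses to exactly $\int_{B_R}\mathcal{J}_j\,e^{-\mathrm{i}\xi\cdot y}\,\mathrm{d}y$, yielding the vector identity $\boldsymbol{\hat{\mathcal{J}}}(\xi) = \int_{\partial B_R}\big(e^{-\mathrm{i}\xi\cdot y}\,\partial_n\boldsymbol{E} - \boldsymbol{E}\,\partial_n e^{-\mathrm{i}\xi\cdot y}\big)\,\mathrm{d}s_y$ on $|\xi|=\kappa$. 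The remaining work is to rewrite this raw gradient/normal-derivative expression into the curl-based form appearing in \eqref{FTE0max-5}. Using $\partial_n e^{-\mathrm{i}\xi\cdot y} = -\mathrm{i}(\xi\cdot\boldsymbol{n})e^{-\mathrm{i}\xi\cdot y}$ together with the vector identities $\nabla\times(\nabla\times\boldsymbol{E}) = -\Delta\boldsymbol{E} + \nabla(\nabla\cdot\boldsymbol{E})$ and the $\mathrm{BAC}$--$\mathrm{CAB}$ rule $\xi\times(\boldsymbol{n}\times\boldsymbol{v}) = \boldsymbol{n}(\xi\cdot\boldsymbol{v}) - \boldsymbol{v}(\xi\cdot\boldsymbol{n})$, I would reorganize the tangential and normal contributions so that the $\nabla(\nabla\cdot\boldsymbol{E})$ correction and the $\kappa^{-2}$-scaled divergence term assemble precisely into the integrand of $\boldsymbol{U}(\xi)$. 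This algebraic reassembly, showing that $\boldsymbol{U}(\xi)$ equals (a nonzero constant multiple of) $\boldsymbol{\hat{\mathcal{J}}}(\xi)$ on the sphere $|\xi|=\kappa$, is the main obstacle, since it requires careful bookkeeping of the curl terms and the $\nabla(\nabla\cdot\boldsymbol{E})$ contributions on the boundary.

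Once the identity $\boldsymbol{U}(\xi) = c\,\boldsymbol{\hat{\mathcal{J}}}(\xi)$ for $|\xi|=\kappa$ with $c\neq 0$ is established, the conclusion is immediate: by Theorem \ref{The-FE}, $\boldsymbol{J}\in H(\mathrm{div};\mathrm{grad};B_R)$ is nonradiating if and only if $\boldsymbol{\hat{\mathcal{J}}}(\xi) = 0$ for all $|\xi|=\kappa$, which by the derived identity is equivalent to $\boldsymbol{U}(\xi) = 0$ for all $|\xi|=\kappa$. This chains the near-field characterization to the already-proven Fourier characterization without any further regularity analysis.
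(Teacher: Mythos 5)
Your opening move --- pairing the Helmholtz equation $\Delta\boldsymbol E+\kappa^2\boldsymbol E=-\boldsymbol{\mathcal J}$ with the plane wave $e^{-\mathrm{i}\xi\cdot y}$, $|\xi|=\kappa$, via the scalar Green's second identity --- is correct, but it lands you on the identity
$\boldsymbol{\hat{\mathcal J}}(\xi)=\int_{\partial B_R}\big[\boldsymbol E(\mathrm{i}\xi\cdot\boldsymbol n)-\partial_n\boldsymbol E\big]e^{-\mathrm{i}\xi\cdot y}\,\mathrm{d}s_y$, which is exactly the quantity $\boldsymbol V(\xi)$ of Theorem \ref{TheHem-NDu}; in other words, you have reproved the companion theorem, not this one. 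The entire content of Theorem \ref{Themax-NDu} is that the \emph{curl-based} boundary functional $\boldsymbol U(\xi)$ also equals $\boldsymbol{\hat{\mathcal J}}(\xi)$, and that is precisely the step you defer to ``careful bookkeeping.'' The tools you name do not close it: the BAC--CAB identity applied pointwise gives $-\mathrm{i}\xi\times(\boldsymbol n\times\boldsymbol E)=\mathrm{i}\boldsymbol E(\xi\cdot\boldsymbol n)-\mathrm{i}\boldsymbol n(\xi\cdot\boldsymbol E)$ and $[\boldsymbol n\times(\nabla\times\boldsymbol E)]_i=n_j\partial_iE_j-\partial_n E_i$, so after matching against $\boldsymbol V(\xi)$ you are left with the residual $\int_{\partial B_R}\big[n_j\partial_iE_j-\mathrm{i}n_i\xi_jE_j\big]e^{-\mathrm{i}\xi\cdot y}\,\mathrm{d}s_y$, whose vanishing is not a pointwise algebraic fact. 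Showing it vanishes requires further volume integrations by parts, the observation that $\nabla\cdot\boldsymbol E=0$ in a neighborhood of $\partial B_R$ (so that $\nabla(\nabla\cdot\boldsymbol E)$ drops out of $\boldsymbol U$ on the boundary), and renewed use of the equation --- none of which appears in your sketch. Asserting $\boldsymbol U(\xi)=c\,\boldsymbol{\hat{\mathcal J}}(\xi)$ with an unspecified $c\neq 0$ is therefore a genuine gap, not a routine computation.

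For comparison, the paper avoids this reassembly entirely by working with the curl--curl form \eqref{Eqal-2-2}: it applies the \emph{vector} Green's identity for $\nabla\times\nabla\times{}-\kappa^2$ to the pair consisting of $\boldsymbol E+\kappa^{-2}\nabla(\nabla\cdot\boldsymbol E)$ and the rows $\boldsymbol G_l$ of the dyadic Green's tensor, which naturally produces the tangential traces $\boldsymbol n\times(\nabla\times\boldsymbol E)$ and $\boldsymbol n\times(\boldsymbol E+\kappa^{-2}\nabla(\nabla\cdot\boldsymbol E))$ appearing in $\boldsymbol U$; it then Fourier-transforms the resulting convolution identity and must argue that the matrix $\tilde{\boldsymbol W}(\xi)(\hat g(\xi))^{1/3}$ is invertible on $|\xi|=\kappa$, a delicate point since $\det\tilde{\boldsymbol W}(\xi)$ vanishes there while $\hat g(\xi)$ blows up, with the product staying finite and nonzero. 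If you want to salvage your route, you would either need to carry out the surface/volume integration by parts proving $\boldsymbol U(\xi)=\boldsymbol V(\xi)$ on $|\xi|=\kappa$ in full, or switch to the vector Green's identity with $\boldsymbol v=\boldsymbol p\,e^{-\mathrm{i}\xi\cdot y}$ as the test field and then handle the extra term coming from $\nabla\times\nabla\times\boldsymbol v-\kappa^2\boldsymbol v=-\xi(\xi\cdot\boldsymbol p)e^{-\mathrm{i}\xi\cdot y}$; either way, the missing step is the theorem.
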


\begin{proof}
From \eqref{Eqal-2-2} and \eqref{EIN-gr}, for $|x|>|y|$, we obtain
\begin{align}\label{Elbmax}
&\int_{B_R}{\boldsymbol  G_l}(x, y)\cdot \boldsymbol{\mathcal{J}}(y){\rm d}y\nonumber\\
&=\int_{B_R}{\boldsymbol  G_l}(x, y)\cdot\Big[\nabla\times\nabla\times\big(
\boldsymbol{E}(y)+ \kappa^{-2}\nabla(\nabla\cdot{\boldsymbol  E}(y))\big)
-\kappa^2\big(\boldsymbol{E}(y)+ \kappa^{-2}\nabla(\nabla\cdot{\boldsymbol  E}(y))\big)\Big]{\rm d}y\nonumber\\
&=\int_{B_R} \bigg[(\nabla_y\times\nabla_y\times{\boldsymbol  G_l}(x, y)-\kappa^2{\boldsymbol  G_l}(x, y))\cdot\big(
\boldsymbol{E}(y)+ \kappa^{-2}\nabla(\nabla\cdot{\boldsymbol  E}(y))\big)
\Big]{\rm d}y\nonumber\\
&\quad+\int_{\partial B_R} \Big[\Big(\boldsymbol  n\times\big(
\boldsymbol{E}(y)+ \kappa^{-2}\nabla(\nabla\cdot{\boldsymbol  E}(y))\big)\Big)\cdot (\nabla_y\times{\boldsymbol  G_l}(x, y))\nonumber\\
&\quad\quad\quad\quad\quad -(\boldsymbol  n\times{\boldsymbol  G_l}(x, y))\cdot \Big(\nabla\times\big(
\boldsymbol{E}(y)+ \kappa^{-2}\nabla(\nabla\cdot{\boldsymbol  E}(y))\big)\Big)\Big]{\rm d} s_y\nonumber\\
&=\int_{\partial B_R} \Big[(\nabla_y\times{\boldsymbol  G_l}(x, y))\cdot \Big(\boldsymbol  n\times\big(
\boldsymbol{E}(y)+ \kappa^{-2}\nabla(\nabla\cdot{\boldsymbol  E}(y))\big)\Big)\nonumber\\
&\quad\quad\quad\quad-(\boldsymbol  n\times{\boldsymbol  G_l}(x, y))\cdot (\nabla\times
\boldsymbol{E}(y))\Big]{\rm d} s_y.  
\end{align}

Define 
\[
g(x)=\frac{e^{\mathrm{i}\kappa|x|}}{4\pi|x|}, \quad \mathcal{G}_l (x)=\big(\boldsymbol{e}_l+
\kappa^{-2}\nabla_{x}\partial_{ x_l}\big)g(x), \quad l=1,2,3, 
\]
where $\boldsymbol e_1=(1, 0, 0), \boldsymbol e_2=(0, 1, 0)$, and $\boldsymbol e_3=(0, 0, 1)$. Taking the Fourier transform and noting the convolution form on both sides of \eqref{Elbmax} gives
\begin{align}\label{FTEmax}
\mathcal{\hat G}_l(\xi)\cdot \boldsymbol {{\mathcal{\hat J}}}(\xi)
&={ \mathcal{\hat G}_l}(\xi)\cdot\int_{\partial B_R}\Big[
\boldsymbol  n\times (\nabla\times\boldsymbol{E}(y))\notag\\
& \quad -(\mathrm{i}{\xi})\times\Big(\boldsymbol  n\times\big(
\boldsymbol{E}(y)+ \kappa^{-2}\nabla(\nabla\cdot{\boldsymbol  E}(y))\big)\Big)\Big]e^{-{\rm i}\xi \cdot y}{\rm d} s_y, 
\end{align}
where 
\begin{align}\label{FTE0max-1}
\hat{\mathcal  G}_l(\xi)
&=\int_{\mathbb{R}^3}
\mathcal{ G}_l(x)e^{-{\rm i}\xi \cdot x}{\rm d} {x}
=\big(\boldsymbol e_l +\kappa^{-2}({\rm i}\xi_l)({\rm i}\xi)\big)\int_{\mathbb{R}^3}
g(x)e^{-{\rm i} \xi \cdot x}{\rm d} {x}\notag\\
&=\big(\boldsymbol e_l -\kappa^{-2}\xi_l \xi\big){\hat g}(\xi),\quad \xi \in \mathbb{R}^3,
\end{align}
where $\xi=(\xi_1, \xi_2, \xi_3)$. 

Let $\boldsymbol  W(\xi)=({ \mathcal{\hat G}_1}(\xi), {\mathcal{\hat G}_2}(\xi), {\mathcal{\hat G}_3}(\xi))$. We deduce from \eqref{FTEmax}--\eqref{FTE0max-1} that 
\begin{align}\label{FTE0max-2}
\boldsymbol  W(\xi)\boldsymbol {{\mathcal{\hat J}}}(\xi)
& =\boldsymbol  W(\xi)\int_{\partial B_R}\Big[\boldsymbol  n\times
(\nabla\times\boldsymbol{E}(y))\notag\\
&\quad -(\mathrm{i}{\xi})\times\Big(\boldsymbol  n\times\big(
\boldsymbol{E}(y)+ \kappa^{-2}\nabla(\nabla\cdot{\boldsymbol  E}(y))\big)\Big)\Big]e^{-{\rm i}\xi \cdot y}{\rm d} s_y,
\end{align}
where
\begin{align}\label{FTE0max-3}
{\boldsymbol W}(\xi)= \tilde{\boldsymbol W}(\xi){\hat g}(\xi)
=({\hat g}(\xi))^{\frac{2}{3}}\big[ \tilde{\boldsymbol W}(\xi)({\hat g}(\xi))^{\frac{1}{3}}\big]
\end{align}
and 
\[
 \tilde{\boldsymbol W}(\xi)=
 \begin{pmatrix}
1-\kappa^{-2}\xi_1^2 & -\kappa^{-2}\xi_1\xi_2&-\kappa^{-2}\xi_1\xi_3\\[5pt]
-\kappa^{-2}\xi_1\xi_2 & 1-\kappa^{-2}\xi_2^2&-\kappa^{-2}\xi_2\xi_3 \\[5pt]
-\kappa^{-2}\xi_1\xi_3 & -\kappa^{-2}\xi_2\xi_3&1-\kappa^{-2}\xi_3^2
 \end{pmatrix}.
\]

It can be verified that 
\begin{align}\label{FTE0max-4}
{\hat g}(\xi)
&=\int_{\mathbb{R}^3}
g(x)e^{-{\rm i}\xi \cdot x}{\rm d} {x}
=\int_{\mathbb{R}^3}
g(x)
\bigg[4\pi\sum_{n=0}^{+\infty}
\sum_{m=-n}^{n}
(-{{\rm i}})^{n}j_{n}(|{\xi}||x|)\overline{Y_n^m({\hat x})}Y_n^m({\hat\xi})\bigg]{\rm d} {x}\nonumber\\
&=4\pi\sum_{n=0}^{+\infty}
\sum_{m=-n}^{n}
(-{{\rm i}})^{n}Y_n^m({\hat\xi})\int_{0}^{+\infty}g(r) j_{n}(|{\xi}|r)r^2\bigg(\int_{0}^{\pi}\int_{0}^{2\pi} \overline{Y_n^m(\theta, \phi)}\sin\theta{\rm d} \theta{\rm d}\phi\bigg){\rm d}r\nonumber\\
&=4\pi\int_{0}^{+\infty}g(r) j_{0}(|\xi|r)r^2{\rm d}
={\rm i}\kappa \int_{0}^{+\infty}h_{0}^{(1)}(\kappa r) j_{0}(|\xi|r)r^2{\rm d}r\nonumber\\
&={\rm i}\kappa \bigg(\int_{0}^{+\infty}
j_{0}(\kappa r)j_{0}(|\xi|r)r^2{\rm d}r
+{\rm i}\int_{0}^{+\infty}
y_{0}(\kappa r)j_{0}(|\xi|r)r^2{\rm d}r\bigg)\nonumber\\
&={\rm i}\kappa \bigg(\frac{\pi}{2\kappa^2}\delta(\kappa-|\xi|)
+0\bigg)=\frac{{\rm i}\pi}{2\kappa}\delta(\kappa-|\xi|)
=\left\{
\begin{aligned}
&\infty, &\quad |\xi| = \kappa,\\
&0, &\quad |\xi| \neq \kappa. 
\end{aligned}
\right.
\end{align}
On the sphere $|\xi| = \kappa$, we obtain from \eqref{FTE0max-2}--\eqref{FTE0max-3} that   
\begin{align}\label{FTE0max-2-1}
&
\big[\tilde{\boldsymbol W}(\xi)({\hat g}(\xi))^{\frac{1}{3}}\big]
 \boldsymbol {{\mathcal{\hat J}}}(\xi)
=\big[\tilde{\boldsymbol{W}}(\xi)(g(\xi))^{\frac{1}{3}}\big]\notag\\
&\times\int_{\partial B_R}\Big[\boldsymbol  n\times
(\nabla\times\boldsymbol{E}(y))-
(\mathrm{i}{\xi})\times\Big(\boldsymbol  n\times\big(
\boldsymbol{E}(y)+ \kappa^{-2}\nabla(\nabla\cdot{\boldsymbol  E}(y))\big)\Big)\Big]e^{-{\rm i}\xi \cdot y}{\rm d} s_y.
\end{align}
A simple calculation yields 
\begin{align}\label{Wtdmax}
\det(\tilde{\boldsymbol W}(\xi))=1-\frac{|\xi|^2}{\kappa^2}=0,\quad \hat{g}(\xi)\simeq \frac{1}{\kappa-|\xi|}=\infty,\quad |\xi| = \kappa, 
\end{align}
which, together with \eqref{FTE0max-3} and \eqref{Wtdmax}, implies that 
\begin{align*}
\det\big[\tilde{\boldsymbol W}(\xi)(\hat{g}(\xi))^{\frac{1}{3}}\big]
=[(\hat{g}(\xi))^{\frac{1}{3}}]^3\det(\tilde{\boldsymbol W}(\xi))
=\hat{g}(\xi)\det(\tilde{\boldsymbol W}(\xi))
\simeq\kappa+|\xi|\neq 0,\quad |\xi|=\kappa,
\end{align*}
Hence, the matrix $\big[\tilde{\boldsymbol W}(\xi)({\hat g}(\xi))^{\frac{1}{3}}\big]$ is invertible for $|\xi|=\kappa$. It follows from \eqref{FTE0max-2-1} that
\begin{align}\label{FTE0max-5}
\boldsymbol {{\mathcal{\hat J}}}(\xi)
 &=\int_{\partial B_R}\Big[
\boldsymbol  n\times
(\nabla\times\boldsymbol{E}(y))-
(\mathrm{i}{\xi})\times\Big(\boldsymbol  n\times\big(
\boldsymbol{E}(y)+ \kappa^{-2}\nabla(\nabla\cdot{\boldsymbol  E}(y))\big)\Big)\Big]e^{-{\rm i}\xi \cdot y}{\rm d} s_y\nonumber\\
&=\boldsymbol{U}(\xi),\quad |\xi|=\kappa.
\end{align}
The conclusion of the theorem now follows from  Theorem \ref{The-FE} and \eqref{FTE0max-5}.
\end{proof}

Let
\begin{align}\label{FTE0-5}
 \boldsymbol{V}(\xi)
 =\int_{\partial B_R}\big[
\boldsymbol{E}(y)(\mathrm{i}{\xi}\cdot\boldsymbol  n(y))
-(\nabla\boldsymbol{E}(y)\boldsymbol  n(y))\big]e^{-{\rm i}\xi \cdot y}{\rm d}s_y,\quad |\xi|=\kappa.
\end{align}

\begin{theorem}\label{TheHem-NDu}
The source $\boldsymbol{J}\in H(\mathrm{div}; \mathrm{grad}; B_R)$ is nonradiating if and only if   $\boldsymbol{V}(\xi)=0$ for $|\xi|=\kappa$.
\end{theorem}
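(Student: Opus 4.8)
The plan is to reduce Theorem~\ref{TheHem-NDu} to the Fourier characterization already proved in Theorem~\ref{The-FE}, which asserts that $\boldsymbol{J}\in H(\mathrm{div};\mathrm{grad};B_R)$ is nonradiating if and only if $\boldsymbol{{\mathcal{\hat J}}}(\xi)=0$ for $|\xi|=\kappa$. Consequently, it suffices to prove that on the critical sphere $|\xi|=\kappa$ the boundary functional $\boldsymbol V(\xi)$ defined in \eqref{FTE0-5} agrees with $\boldsymbol{{\mathcal{\hat J}}}(\xi)$ up to an overall sign; the whole argument is thus a boundary-integral reformulation of this Fourier transform.

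By \eqref{Eqal-2-3}, the electric field solves the vector Helmholtz equation $\Delta\boldsymbol E+\kappa^2\boldsymbol E=-\boldsymbol{\mathcal J}$ in $B_R$, where $\boldsymbol{\mathcal J}=\boldsymbol J+\kappa^{-2}\nabla(\nabla\cdot\boldsymbol J)\in(L^2(B_R))^3$ is compactly supported. The representation \eqref{EIN-gr1} then yields $\boldsymbol E\in (H^2_{\mathrm{loc}}(\mathbb R^3))^3$, so that the Cauchy data $\boldsymbol E|_{\partial B_R}$ and $(\nabla\boldsymbol E)\boldsymbol n|_{\partial B_R}=\partial_{\boldsymbol n}\boldsymbol E|_{\partial B_R}$ are well defined and Green's identities apply. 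The decisive observation is that, precisely when $|\xi|=\kappa$, the plane wave $y\mapsto e^{-{\rm i}\xi\cdot y}$ is a homogeneous Helmholtz solution, since $\Delta e^{-{\rm i}\xi\cdot y}+\kappa^2 e^{-{\rm i}\xi\cdot y}=(\kappa^2-|\xi|^2)e^{-{\rm i}\xi\cdot y}=0$.

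First I would apply Green's second identity componentwise on $B_R$ to the pair $E_l$ and $e^{-{\rm i}\xi\cdot y}$. Because both functions carry the same Helmholtz factor $\kappa^2$, the two $\kappa^2$-terms cancel in the volume integrand, which collapses to $e^{-{\rm i}\xi\cdot y}\mathcal J_l(y)$; integrating over $B_R$ and using that $\boldsymbol{\mathcal J}$ is supported there reproduces exactly the component $\boldsymbol{{\mathcal{\hat J}}}_l(\xi)$. On the boundary, substituting $\partial_{\boldsymbol n}e^{-{\rm i}\xi\cdot y}=-{\rm i}(\xi\cdot\boldsymbol n)e^{-{\rm i}\xi\cdot y}$ together with $(\nabla\boldsymbol E)\boldsymbol n=\partial_{\boldsymbol n}\boldsymbol E$ produces the integrand $\boldsymbol E(\mathrm i\xi\cdot\boldsymbol n)-(\nabla\boldsymbol E)\boldsymbol n$ appearing in \eqref{FTE0-5}. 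Assembling the three components yields $\boldsymbol{{\mathcal{\hat J}}}(\xi)=\boldsymbol V(\xi)$ for $|\xi|=\kappa$, after tracking the overall sign and, if needed, exploiting the invariance of the sphere $|\xi|=\kappa$ under $\xi\mapsto-\xi$.

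The conclusion is then immediate from Theorem~\ref{The-FE}: the source $\boldsymbol J$ is nonradiating if and only if $\boldsymbol{{\mathcal{\hat J}}}(\xi)=0$ for $|\xi|=\kappa$, which by the identity above is equivalent to $\boldsymbol V(\xi)=0$ for $|\xi|=\kappa$. The main technical point I expect is the bookkeeping in Green's identity, namely verifying that the volume contribution reduces solely to $\boldsymbol{{\mathcal{\hat J}}}(\xi)$---which hinges entirely on the constraint $|\xi|=\kappa$ that annihilates the plane-wave Helmholtz operator---and confirming the sign conventions so that the boundary terms reproduce \eqref{FTE0-5} exactly. An equivalent route, mirroring the proof of Theorem~\ref{Themax-NDu}, is to convolve \eqref{Eqal-2-3} with the scalar Green's function $g$, rewrite the resulting volume potential as a boundary integral through Green's identity, take the Fourier transform using the convolution structure, and cancel the common scalar factor $\hat g(\xi)=\frac{{\rm i}\pi}{2\kappa}\delta(\kappa-|\xi|)$ from \eqref{FTE0max-4}, which is nonvanishing on $|\xi|=\kappa$; unlike the dyadic case treated there, no matrix inversion is needed here because the factor is scalar.
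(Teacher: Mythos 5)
Your strategy coincides with the paper's: reduce to Theorem \ref{The-FE} by converting $\boldsymbol{{\mathcal{\hat J}}}(\xi)$ on $|\xi|=\kappa$ into Cauchy data of $\boldsymbol E$ on $\partial B_R$ through Green's second identity. Your primary route --- testing \eqref{Eqal-2-3} directly against the plane wave $e^{-{\rm i}\xi\cdot y}$, which is an entire Helmholtz solution exactly when $|\xi|=\kappa$ --- is in fact cleaner than the paper's version, which first writes the boundary representation \eqref{Elb} for $x$ outside $B_R$, then takes a Fourier transform and cancels the common factor $\hat g(\xi)=\frac{{\rm i}\pi}{2\kappa}\delta(\kappa-|\xi|)$, a distribution supported on the sphere, so that the division is only formal. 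Your direct version reaches the same identity without that step, and your alternative route in the last paragraph is precisely the paper's argument.

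However, the identity you assert, $\boldsymbol{{\mathcal{\hat J}}}(\xi)=\boldsymbol V(\xi)$, is not what the computation yields, and the discrepancy is neither an overall sign nor curable by $\xi\mapsto-\xi$. With $v=e^{-{\rm i}\xi\cdot y}$ one has $\partial_{\boldsymbol n}v=-({\rm i}\xi\cdot\boldsymbol n)v$, so Green's second identity applied to $E_l$ and $v$ gives
\[
\boldsymbol{{\mathcal{\hat J}}}(\xi)=\int_{\partial B_R}\big[-\boldsymbol E(y)({\rm i}\xi\cdot\boldsymbol n(y))-(\nabla\boldsymbol E(y)\boldsymbol n(y))\big]e^{-{\rm i}\xi\cdot y}\,{\rm d}s_y,\qquad |\xi|=\kappa,
\]
which differs from \eqref{FTE0-5} in the \emph{relative} sign of the two boundary terms; replacing $\xi$ by $-\xi$ also flips the exponential, so it does not recover $\boldsymbol V$. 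The same sign occurs in the paper's own step \eqref{FTE}: since $\nabla_y g(x,y)=-\nabla_x g(x,y)$, the Fourier transform in $x$ of $\nabla_y g(x,y)$ is $-{\rm i}\xi\,\hat g(\xi)e^{-{\rm i}\xi\cdot y}$, not $+{\rm i}\xi\,\hat g(\xi)e^{-{\rm i}\xi\cdot y}$, so the definition of $\boldsymbol V$ appears to carry a sign slip. This matters for the statement: the ``only if'' direction holds for either functional, because a nonradiating source has vanishing Cauchy data $\boldsymbol E=\partial_{\boldsymbol n}\boldsymbol E=0$ on $\partial B_R$; but the ``if'' direction needs the functional that actually equals $\boldsymbol{{\mathcal{\hat J}}}$, i.e.\ the corrected signs. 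You should either prove the theorem for the corrected functional or justify separately why $\boldsymbol V=0$ on $|\xi|=\kappa$ forces $\boldsymbol{{\mathcal{\hat J}}}=0$ there; as written, ``after tracking the overall sign'' leaves the decisive step unverified.
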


\begin{proof}
From \eqref{Eqal-2-3} and \eqref{EIN-gr1}, for $|x|>|y|$, we obtain
\begin{align}\label{Elb}
&\int_{B_R} g(x, y) \boldsymbol{\mathcal{J}}(y){\rm d}y
=\int_{B_R} g(x, y) \big(\boldsymbol  J (y)+ \kappa^{-2}\nabla(\nabla\cdot{\boldsymbol  J}(y))\big){\rm d} y\nonumber\\
&=\int_{B_R} g(x, y)\big(\boldsymbol  J (y)+ \kappa^{-2}\nabla(\nabla\cdot{\boldsymbol  J}(y))\big){\rm d}y
=\int_{B_R} g(x, y) \big[-(\Delta\boldsymbol{E}(y)
+\kappa^2\boldsymbol{E}(y))\big]{\rm d}y\nonumber\\
&=-\int_{B_R} \big[\big(\Delta_y g(x, y)+\kappa^2 g(x, y)\big)\boldsymbol{E}(y)\big]{\rm d}y\nonumber\\
&\quad-\int_{\partial B_R} \big[g(x, y)(\nabla\boldsymbol{E}(y)\boldsymbol  n(y))-\boldsymbol{E}(y)(\nabla_y g(x, y)\cdot\boldsymbol  n(y))\big]{\rm d} s_y\nonumber\\
&=\int_{\partial B_R}
\big[\boldsymbol{E}(y)(\nabla_y g(x, y)\cdot\boldsymbol  n(y))-g(x, y)(\nabla\boldsymbol{E}(y)\boldsymbol  n(y))\big]{\rm d} s_y. 
\end{align}
Taking the Fourier transform on both sides of \eqref{Elb} leads to 
\begin{align}\label{FTE}
{\hat g}(\xi) \boldsymbol {{\mathcal{\hat J}}}(\xi)
&={\hat g}(\xi)\int_{\partial B_R}\big[
\boldsymbol{E}(y)(\mathrm{i}{\xi}\cdot\boldsymbol  n(y))
- (\nabla\boldsymbol{E}(y)\boldsymbol  n(y))\big]e^{-{\rm i}\xi \cdot y}{\rm d} s_y.
\end{align}
We obtain from \eqref{FTE0max-4} and \eqref{FTE} that 
\begin{align}\label{FTE0-5}
 \boldsymbol {{\mathcal{\hat J}}}(\xi)
 &=\int_{\partial B_R}\big[
\boldsymbol{E}(y)(\mathrm{i}{\xi}\cdot\boldsymbol  n(y))
-(\nabla\boldsymbol{E}(y)\boldsymbol  n(y))\big]e^{-{\rm i}\xi \cdot y}{\rm d}s_y
=\boldsymbol{V}(\xi),\quad |\xi|=\kappa.
\end{align}
The proof is completed by combining Theorem \ref{The-FE} and \eqref{FTE0-5}.
\end{proof}

\subsection{Null spaces}\label{ns}

This section aims to investigate the characterizations of nonradiating sources in Maxwell's equations through the analysis of null spaces associated with integral operators.

Consider the complex conjugate of the Green's tensor for the Maxwell equation
\begin{align*}
\boldsymbol{G}^{*}(x, y)=\big(\boldsymbol{I}+\kappa^{-2}
\nabla_x\nabla_x\big)\overline{g(x, y)},
\end{align*}
which satisfies
\begin{equation*}
\nabla_x\times(\nabla_x\times\boldsymbol{G}^{*}
(x, y))-\kappa^{2}\boldsymbol{G}^{*}(x, y)
=\delta(x-y)\boldsymbol{I}.
\end{equation*}
A simple calculation yields
\begin{align*}
\boldsymbol{G}(x, y)-\boldsymbol{G}^{*}(x, y)
&=\big(\boldsymbol{I}+\kappa^{-2}
\nabla_x\nabla_x\big)
\bigg(\frac{e^{\mathrm{i}\kappa|x-y|}}{
4\pi|x -y|}-\frac{e^{-\mathrm{i}\kappa|x-y|}}{
4\pi|x -y|}\bigg)\nonumber\\
&=\big(\boldsymbol{I}+\kappa^{-2}
\nabla_x\nabla_x\big)\bigg(\frac{2\mathrm{i}\sin{(\kappa|x-y|)}}{
4\pi|x -y|}\bigg)
=\frac{\mathrm{i}\kappa}{
2\pi}\big(\boldsymbol{I}+\kappa^{-2}
\nabla_y\nabla_y\big)j_0(\kappa|x-y|),
\end{align*}
where $j_0$ is the spherical Bessel function of order 0. 

\begin{definition}\label{Nsde1}
The source function $\boldsymbol  J\in (L^2(B_R))^3$ is considered to be in the null space $\mathcal{N}_1(R)$ if
\begin{align}\label{DNspa-1}
\int_{B_R}\Big[\big(\boldsymbol{I}+\kappa^{-2}\nabla_y\nabla_y\big)j_0(\kappa|x-y|)\Big]\boldsymbol{J}(y){\rm d}y =0,\quad |x| > R.
\end{align}
\end{definition}

For $x, y\in\mathbb{R}^3$, it is noteworthy that $j_{0}$ has a spherical harmonic expansion given by
\begin{align}\label{j0ex}
j_{0}(\kappa|x-y|)
=4\pi\sum_{n=0}^{\infty}\sum_{m=-n}^{n}j_{n}(\kappa|x|)Y_{n}^{m}({\hat x})j_{n}(\kappa|y|)\overline{Y_{n}^{m}({\hat y})},\quad |x|>|y|.
\end{align}
We have from \eqref{DNspa-1}--\eqref{j0ex} that
\begin{align}\label{BRJbe-1}
&\int_{B_R}\Big[\big(\boldsymbol{I}+\kappa^{-2}
\nabla_y\nabla_y\big)j_0(\kappa|x-y|)\Big]\boldsymbol{J}(y){\rm d}y \nonumber\\
&=\sum_{n=0}^{\infty}\sum_{m=-n}^{n}4\pi j_{n}(\kappa|x|)Y_{n}^{m}({\hat x})\int_{B_R}\bigg(\big(\boldsymbol{I}+\kappa^{-2}\nabla_y\nabla_y \big)j_{n}(\kappa|y|)\overline{Y_{n}^{m}({\hat y})}\bigg) \boldsymbol{J}(y){\rm d}  y\nonumber\\
&=4\pi\sum\limits_{n=0}^{\infty}\sum\limits_{m=-n}^{n} j_{n}(\kappa|x|)Y_{n}^{m}({\hat x})\beta_n^{m},
\quad |x|>R.
\end{align}

Building upon Theorem \ref{The-123}, we can deduce the following result.

\begin{theorem}\label{The-NR-1}
Assuming $\boldsymbol{J} \in (L^2(B_R))^3$, the source $\boldsymbol{J}$ is nonradiating if and only if $\boldsymbol{J} \in \mathcal{N}_1(R)$.
\end{theorem}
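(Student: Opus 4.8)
The plan is to read the equivalence directly off the series expansion \eqref{BRJbe-1}, combined with the spectral characterization in Theorem \ref{The-123}(i). By Definition \ref{Nsde1} together with \eqref{BRJbe-1}, the membership $\boldsymbol{J}\in\mathcal{N}_1(R)$ is precisely the assertion that
\[
4\pi\sum_{n=0}^{\infty}\sum_{m=-n}^{n} j_{n}(\kappa|x|)Y_{n}^{m}({\hat x})\beta_n^{m}=0,\quad |x|>R,
\]
so the whole task reduces to showing that this vanishing is equivalent to $\beta_n^m=\boldsymbol 0$ for all admissible $n,m$.

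First I would extract the coefficients. Fixing an arbitrary radius $r>R$ and writing $x=r\hat x$, the displayed identity becomes a vanishing spherical-harmonic expansion on the unit sphere. Multiplying by $\overline{Y_{n}^{m}(\hat x)}$ and integrating over the unit sphere, the orthonormality of the spherical harmonics isolates $j_{n}(\kappa r)\,\beta_n^m=\boldsymbol 0$ for every $r>R$ and every pair $(n,m)$, with the argument carried out componentwise since each $\beta_n^m\in\mathbb{C}^3$. Next, because every spherical Bessel function $j_n$ is real-analytic and not identically zero, there exists some $r_0>R$ with $j_n(\kappa r_0)\neq 0$; hence $j_{n}(\kappa r)\,\beta_n^m=\boldsymbol 0$ on $(R,\infty)$ forces $\beta_n^m=\boldsymbol 0$. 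The converse implication is immediate, so $\boldsymbol J\in\mathcal{N}_1(R)$ holds if and only if $\beta_n^m=0$ for all $n=0,1,\dots$ and $m=-n,\dots,n$.

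Finally, Theorem \ref{The-123}(i) identifies the condition $\beta_n^m=\alpha_n^m+\gamma_n^m=0$ for all $n,m$ with $\boldsymbol{J}$ being nonradiating. Chaining the two equivalences then yields the claim. The main obstacle is the coefficient-extraction step: one must justify interchanging the infinite sum with the integration over the sphere and invoke the uniqueness of the spherical-harmonic expansion to pass from the vanishing of the full series in the exterior region to the vanishing of each coefficient vector. This is routine given the orthonormality of $\{Y_n^m\}$ and the nonvanishing of $j_n$, but it is the only point at which genuine analysis, rather than algebraic bookkeeping, enters the argument.
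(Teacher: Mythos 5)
Your proposal is correct and follows essentially the same route as the paper: both reduce the statement to the equivalence between the vanishing of the expansion \eqref{BRJbe-1} for $|x|>R$ and the vanishing of all coefficients $\beta_n^m$, and then invoke Theorem \ref{The-123}(i). The only difference is that you explicitly justify the coefficient-extraction step (orthonormality of the $Y_n^m$ on a sphere of radius $r>R$ plus the fact that $j_n(\kappa r)$ is not identically zero), a detail the paper's proof leaves implicit.
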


\begin{proof}
If $\boldsymbol{J} \in (L^2(B_R))^3$ is a nonradiating source for Maxwell's equations, Theorem \ref{The-123}, together with \eqref{BRJbe-1}, implies that $\boldsymbol{J}$ belongs to the null space $\mathcal{N}_1(R)$.

On the other hand, if $\boldsymbol{J} \in \mathcal{N}_1(R)$, then, from \eqref{BRJbe-1}, for all $|x|>R$, we have
\begin{align*}
0&=\int{B_R}\left[\left(\boldsymbol{I}+\kappa^{-2}\nabla_y\nabla_y\right)j_0(\kappa|x-y|)\right]\boldsymbol{J}(y)dy\\
&=4\pi\sum_{n=0}^{\infty}\sum_{m=-n}^{n}
j_{n}(\kappa|x|)Y_{n}^{m}(\hat{x})\beta_n^{m}, \quad|x|>R.
\end{align*}
Hence, we obtain $\beta_n^m=0$ for all $n = 0,1,\dots, m = -n, \dots, n$. It follows from Theorem \ref{The-123} that $\boldsymbol{J}$ is a nonradiating source.
\end{proof}

\begin{definition}\label{Nsde2}
The source function $\boldsymbol  J\in H_0(\mathrm{div};B_R)$ is considered to be in the null space $\mathcal{N}_2(R)$ if
\begin{align}\label{DNspa-2}
\int_{B_R} j_0(\kappa|x-y|)\boldsymbol{J}(y){\rm d}y -\kappa^{-2} \int_{B_R}\nabla_y j_0(\kappa|x-y|)\nabla\cdot\boldsymbol{J}(y) {\rm d}y=0,\quad |x| > R.
\end{align}
\end{definition}

\begin{theorem}\label{The-NR-2}
Assuming $\boldsymbol  J\in H_0(\mathrm{div};B_R)$, the source $\boldsymbol  J$ is nonradiating if and only if $\boldsymbol  J \in \mathcal{N}_2(R)$.
\end{theorem}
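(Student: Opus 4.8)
Theorem \ref{The-NR-2} characterizes nonradiating sources in $H_0(\mathrm{div};B_R)$ via membership in the null space $\mathcal{N}_2(R)$ defined by \eqref{DNspa-2}. The plan is to show that the integral appearing in \eqref{DNspa-2} is exactly the expansion $4\pi\sum_{n,m} j_n(\kappa|x|)Y_n^m(\hat x)\beta_n^m$ with $\beta_n^m=\alpha_n^m+\zeta_n^m$, and then invoke part (ii) of Theorem \ref{The-123} together with the linear independence of the functions $\{j_n(\kappa|x|)Y_n^m(\hat x)\}$ for $|x|>R$. This mirrors the structure of the proof of Theorem \ref{The-NR-1}, with the $L^2$ coefficient $\gamma_n^m$ replaced by its $H_0(\mathrm{div})$ counterpart $\zeta_n^m$.

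First I would rewrite the left-hand side of \eqref{DNspa-2} using the expansion \eqref{j0ex} of $j_0(\kappa|x-y|)$ into spherical harmonics. The first integral $\int_{B_R} j_0(\kappa|x-y|)\boldsymbol J(y)\,{\rm d}y$ yields $4\pi\sum_{n,m} j_n(\kappa|x|)Y_n^m(\hat x)\,\alpha_n^m$ directly from the definition \eqref{alpha} of $\alpha_n^m$. For the second integral, I would apply the gradient $\nabla_y$ to the expansion \eqref{j0ex}, noting that $\nabla_y j_0(\kappa|x-y|)$ produces terms $\nabla_y\big(j_n(\kappa|y|)\overline{Y_n^m(\hat y)}\big)$ while $j_n(\kappa|x|)Y_n^m(\hat x)$ remain as $x$-dependent coefficients. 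Pairing this against $\nabla\cdot\boldsymbol J(y)$ and multiplying by $-\kappa^{-2}$ reproduces precisely the expression for $\zeta_n^m$ given in the second line of \eqref{EINE-11}. Combining the two contributions gives
\begin{align*}
\int_{B_R} j_0(\kappa|x-y|)\boldsymbol J(y)\,{\rm d}y
-\kappa^{-2}\int_{B_R}\nabla_y j_0(\kappa|x-y|)\,\nabla\cdot\boldsymbol J(y)\,{\rm d}y
=4\pi\sum_{n=0}^{\infty}\sum_{m=-n}^{n} j_n(\kappa|x|)Y_n^m(\hat x)\,\beta_n^m
\end{align*}
for $|x|>R$, where $\beta_n^m=\alpha_n^m+\zeta_n^m$.

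With this identity in hand, the two implications follow readily. If $\boldsymbol J$ is nonradiating, then Theorem \ref{The-123}(ii) gives $\beta_n^m=0$ for all $n,m$, so the displayed sum vanishes and $\boldsymbol J\in\mathcal{N}_2(R)$. Conversely, if $\boldsymbol J\in\mathcal{N}_2(R)$, the sum vanishes for every $|x|>R$; since the products $j_n(\kappa|x|)Y_n^m(\hat x)$ are linearly independent as functions on $\{|x|>R\}$, each coefficient $\beta_n^m$ must vanish, and Theorem \ref{The-123}(ii) then yields that $\boldsymbol J$ is nonradiating.

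The main technical point to justify carefully is the interchange of integration (over $B_R$) with the infinite summation (over $n,m$) when applying $\nabla_y$ to the expansion \eqref{j0ex}; this requires the uniform convergence of the series and its term-by-term differentiated version for $|x|>R>|y|$, which is guaranteed by the addition theorem since $j_0(\kappa|x-y|)$ is entire. The divergence-theorem step that converts $\int_{B_R}\nabla_y(\cdots)\,\nabla\cdot\boldsymbol J\,{\rm d}y$ into the $\zeta_n^m$ form is identical to \eqref{EINE-11} and relies on the boundary condition $(\boldsymbol J\cdot\boldsymbol n)|_{\partial B_R}=0$ built into $H_0(\mathrm{div};B_R)$, which annihilates the surface term. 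Everything else is bookkeeping, so the hardest part is simply verifying that the second integral in \eqref{DNspa-2} is genuinely $\zeta_n^m$ rather than $\gamma_n^m$, i.e., that working with $j_0$ (whose expansion uses $j_n j_n$ rather than $h_n^{(1)} j_n$) leaves the source-side coefficients unchanged.
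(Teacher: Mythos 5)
Your proposal is correct and follows essentially the same route as the paper: expand $j_0(\kappa|x-y|)$ via \eqref{j0ex}, identify the coefficients of $j_n(\kappa|x|)Y_n^m(\hat x)$ as $\alpha_n^m+\zeta_n^m=\beta_n^m$, and conclude by Theorem \ref{The-123}(ii) together with linear independence. The only cosmetic difference is that you invoke the divergence theorem to pass to the $\zeta_n^m$ form, whereas the second integral in \eqref{DNspa-2} already matches the definition of $\zeta_n^m$ term by term; this does not affect the argument.
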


\begin{proof}
For $\boldsymbol  J\in H_0(\mathrm{div};B_R)$, we deduce from \eqref{j0ex} and the divergence theorem that 
\begin{align}\label{BRJbe-2}
&\int_{B_R} j_0(\kappa|x-y|)\boldsymbol{J}(y){\rm d}y - \kappa^{-2}\int_{B_R}\nabla_y j_0(\kappa|x-y|)\nabla\cdot\boldsymbol{J}(y){\rm d}y \nonumber\\
&=4\pi \sum_{n=0}^{\infty}\sum_{m=-n}^{n}j_{n}(\kappa|x|)Y_{n}^{m}({\hat x})
\bigg[\int_{B_R} j_{n}(\kappa|y|)\overline{Y_{n}^{m}({\hat y})} \boldsymbol{J}(y){\rm d}y
-\kappa^{-2}\int_{B_R}\Big(\nabla_y j_{n}(\kappa|y|)\overline{Y_{n}^{m}({\hat y})}\Big) \nabla\cdot\boldsymbol{J}(y){\rm d}y\bigg]\nonumber\\
&=4\pi\sum\limits_{n=0}^{\infty}\sum\limits_{m=-n}^{n}
 j_{n}(\kappa|x|)Y_{n}^{m}({\hat x})(\alpha_n^{m}
+\zeta_n^{m}), \quad |x|>R.
\end{align}
The conclusion of the theorem now follows from Theorem \ref{The-123} and \eqref{BRJbe-2}.
\end{proof}

\begin{definition}\label{Nsde3}
The source function $\boldsymbol  J\in H(\mathrm{div}; \mathrm{grad}; B_R)$ is considered to be in the null space $\mathcal{N}_3(R)$ if
\begin{align}\label{DNspa-3}
\int_{B_R}j_0(\kappa|x-y|)\Big(\boldsymbol  J (y)+ \kappa^{-2}\nabla(\nabla\cdot{\boldsymbol  J}(y))\bigg){\rm d}  y =0,\quad |x| > R.
\end{align}
\end{definition}

\begin{theorem}\label{The-NR-3}
Assuming $\boldsymbol  J\in H(\mathrm{div}; \mathrm{grad}; B_R)$, the source $\boldsymbol  J$ is nonradiating if and only if $\boldsymbol  J \in \mathcal{N}_3(R)$.
\end{theorem}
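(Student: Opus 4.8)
The plan is to mirror the structure of the proofs of Theorems~\ref{The-NR-1} and \ref{The-NR-2}, exploiting the now-familiar spherical harmonic expansion \eqref{j0ex} of $j_0(\kappa|x-y|)$ together with the regularity characterization in Theorem~\ref{The-123}(iii). The goal is to show that the null-space condition \eqref{DNspa-3} is equivalent to the vanishing of all coefficients $\alpha_n^m+\eta_n^m$, which by Theorem~\ref{The-123}(iii) is exactly the nonradiating condition for $\boldsymbol{J}\in H(\mathrm{div};\mathrm{grad};B_R)$.

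First I would insert the expansion \eqref{j0ex} into the integral on the left-hand side of \eqref{DNspa-3}. Since $\boldsymbol{J}\in H(\mathrm{div};\mathrm{grad};B_R)$ guarantees that $\boldsymbol{\mathcal J}(y)=\boldsymbol J(y)+\kappa^{-2}\nabla(\nabla\cdot\boldsymbol J(y))\in (L^2(B_R))^3$, I can interchange the sum and the integral and obtain, for $|x|>R$,
\begin{align*}
&\int_{B_R}j_0(\kappa|x-y|)\big(\boldsymbol  J (y)+ \kappa^{-2}\nabla(\nabla\cdot{\boldsymbol  J}(y))\big){\rm d}y\\
&=4\pi\sum_{n=0}^{\infty}\sum_{m=-n}^{n}
 j_{n}(\kappa|x|)Y_{n}^{m}({\hat x})
\int_{B_R} j_{n}(\kappa|y|)\overline{Y_{n}^{m}({\hat y})}
\big(\boldsymbol  J (y)+ \kappa^{-2}\nabla(\nabla\cdot{\boldsymbol  J}(y))\big){\rm d}y.
\end{align*}
By the definitions \eqref{alpha} of $\alpha_n^m$ and of $\eta_n^m$, the inner integral equals precisely $\alpha_n^m+\eta_n^m$, exactly as already computed in \eqref{plJA-3}. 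This identifies the left-hand side of \eqref{DNspa-3} with the single series $4\pi\sum_{n,m}j_n(\kappa|x|)Y_n^m(\hat x)(\alpha_n^m+\eta_n^m)$.

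Having established this series representation, I would conclude by a standard orthogonality argument, just as in the proofs of the two preceding null-space theorems. If $\boldsymbol J$ is nonradiating, Theorem~\ref{The-123}(iii) gives $\alpha_n^m+\eta_n^m=0$ for all $n,m$, so every term of the series vanishes and \eqref{DNspa-3} holds; hence $\boldsymbol J\in\mathcal{N}_3(R)$. Conversely, if $\boldsymbol J\in\mathcal{N}_3(R)$, the series vanishes for all $|x|>R$; since $\{Y_n^m(\hat x)\}$ is a complete orthonormal system on the unit sphere and $j_n(\kappa|x|)$ is not identically zero on any interval, the coefficients must all satisfy $\alpha_n^m+\eta_n^m=0$, and Theorem~\ref{The-123}(iii) then yields that $\boldsymbol J$ is nonradiating.

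The only point requiring care, and thus the main obstacle, is the justification of the termwise integration and of the coefficient-vanishing step. For the former, one must confirm that the expansion \eqref{j0ex} converges appropriately (uniformly on compact subsets of $\{|x|>|y|\}$) so that Fubini applies against the $L^2$ density $\boldsymbol{\mathcal J}$; this is the same convergence already used tacitly in \eqref{BRJbe-1} and \eqref{BRJbe-2}, so it carries over verbatim. For the latter, vanishing of the series for every $\hat x$ on the sphere forces each angular coefficient $j_n(\kappa|x|)(\alpha_n^m+\eta_n^m)$ to vanish by orthogonality of the spherical harmonics, and then nondegeneracy of $j_n(\kappa|x|)$ (it is real-analytic and nontrivial) removes the radial factor. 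Since all of this machinery has already been deployed in the proofs of Theorems~\ref{The-NR-1} and \ref{The-NR-2}, the present argument is essentially a transcription with $\beta_n^m$ (resp.\ $\alpha_n^m+\zeta_n^m$) replaced by $\alpha_n^m+\eta_n^m$, and no genuinely new difficulty arises.
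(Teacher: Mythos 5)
Your proposal is correct and follows essentially the same route as the paper: insert the expansion \eqref{j0ex} into \eqref{DNspa-3}, identify the resulting coefficients with $\alpha_n^m+\eta_n^m$ as in \eqref{plJA-3}, and conclude via Theorem \ref{The-123}(iii). The extra care you take with the termwise integration and the orthogonality argument only makes explicit what the paper leaves implicit.
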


\begin{proof}
For $\boldsymbol  J\in H(\mathrm{div}; \mathrm{grad}; B_R)$, we deduce from \eqref{j0ex} that 
\begin{align}\label{BRJbe-3}
&\int_{B_R}j_0(\kappa|x-y|)\Big(\boldsymbol  J (y)+ \kappa^{-2}\nabla(\nabla\cdot{\boldsymbol  J}(y))\Big){\rm d}y \nonumber\\
&=4\pi \sum_{n=0}^{\infty}\sum_{m=-n}^{n}j_{n}(\kappa|x|)Y_{n}^{m}({\hat x})\int_{B_R}(j_{n}(\kappa|y|)\overline{Y_{n}^{m}({\hat y})}) \Big(\boldsymbol  J (y)+ \kappa^{-2}\nabla(\nabla\cdot{\boldsymbol  J}(y))\Big){\rm d}y\nonumber\\
&=4\pi\sum\limits_{n=0}^{\infty}\sum\limits_{m=-n}^{n}
 j_{n}(\kappa|x|)Y_{n}^{m}({\hat x})
(\alpha_n^{m} +\eta_n^{m}), \quad |x|>R.
\end{align}
The proof is completed by combining Theorem \ref{The-123} and \eqref{BRJbe-3}.
\end{proof}

\section{Examples of nonradiating sources}\label{S:ex}

In this section, we provide several specific examples of nonradiating sources to illustrate that the null spaces discussed in Section \ref{ns} are nontrivial.

\subsection{General nonradiating sources}

Consider a smooth function with compact support $\boldsymbol  F\in (C_{0}^{\infty}(D))^3$, where $D$ represents an open domain of compact support contained in the ball $B_R$. By applying the Maxwell operator $(\nabla\times\nabla\times-\kappa^2)$ to the function $\boldsymbol  F$, resulting in the function $\boldsymbol  J$, we obtain
\begin{equation}\label{JxD}
\boldsymbol  J(x)=(\nabla\times\nabla\times-\kappa^2)\boldsymbol  F(x)\in (C_{0}^{\infty}(D))^3,\quad x\in\mathbb{R}^3.
\end{equation}

By Definition \ref{def-E}, it is evident that the function $\boldsymbol{J}$ given in \eqref{JxD} also belongs to the space $(C_{0}^{\infty}(D))^3$ and constitutes a nonradiating source for Maxwell's equations. From \eqref{Eqal-3}, \eqref{EIN} and \eqref{JxD}, we have from integration by parts that 
\begin{align*} 
\boldsymbol  F(x)&=\int_{B_R}{\boldsymbol  G}(x, y) \boldsymbol{J}(y){\rm d} y
=\int_{B_R}\boldsymbol  G(x, y) [\nabla\times\nabla\times\boldsymbol{F}(y)
-\kappa^2\boldsymbol{F}(y)]{\rm d}y\nonumber\\
&=\int_{B_R} [(\nabla_{y}\times\nabla_{y}\times{\boldsymbol  G}(x, y)
-\kappa^2{\boldsymbol  G}(x, y))\boldsymbol{F}(y)]{\rm d}y\nonumber\\
&\quad+\int_{\partial B_R} [(\nabla_{y}\times{\boldsymbol  G}(x, y))(\boldsymbol  n\times\boldsymbol{F}(y))-(\boldsymbol  n\times{\boldsymbol  G}(x, y)) (\nabla\times\boldsymbol{F}(y))]{\rm d} s_y=0,\quad |x|>R,
\end{align*}
which implies that $\boldsymbol  J$ is a nonradiating source.

Below, we demonstrate, through equivalent characterizations, that the function $\boldsymbol{J}$ is a nonradiating source. It is clear to note that $\boldsymbol  J(x)\in H(\mathrm{div}; \mathrm{grad}; B_R)$ and
\begin{align}\label{Jhm1-1-1}
\boldsymbol  J(x)+\kappa^{-2}\nabla(\nabla\cdot\boldsymbol  J(x))
=-(\Delta\boldsymbol{F}(x)+\kappa^2\boldsymbol{F}(x))\in (C_{0}^{\infty}(D))^3. 
\end{align}
For all $n = 0,1,\dots, m = -n, \dots, n$, we deduce from \eqref{Jhm1-1-1} and integration by parts that 
\begin{align*}
\alpha_n^{m} +\eta_n^{m}&=\int_{B_R}j_{n}(\kappa|y|)\overline{Y_{n}^{m}({\hat y})}\Big({\boldsymbol  J}(y)+\kappa^{-2}\nabla(\nabla\cdot{\boldsymbol  J}(y))\Big){\rm d}y\nonumber\\
&=-\int_{B_R}j_{n}(\kappa|y|)\overline{Y_{n}^{m}({\hat y})}
(\Delta\boldsymbol{F}(y)+\kappa^2\boldsymbol{F}(y)){\rm d}y
\nonumber\\
&=-\int_{B_R}\big[(\Delta+\kappa^2)j_{n}(\kappa|y|)\overline{Y_{n}^{m}({\hat y})}\big]
\boldsymbol{F}(y){\rm d}y=0,
\end{align*}
which implies that $\boldsymbol  J \in \mathcal{N}_3(R)$.

\subsection{A nonradiating source in $\mathcal{N}_1(R)$}\label{exn2}

Define the space $H(\mathrm{grad};B_R)$ by 
\begin{align*}
H(\mathrm{grad};B_R)=\big\{{\Psi}\in L^2(B_R): \nabla{\Psi}\in (L^2(B_R))^3\big\},
\end{align*} 
and its subspace 
\begin{align*}H_0(\mathrm{grad};B_R)=
\big\{{\Psi}\in H(\mathrm{grad};B_R): {\Psi}|_{\partial B_R}=0\big\}.
\end{align*}

Consider a function $Q\in H_{0}(\mathrm{grad}; B_R)$,  which is assumed to have a compact support contained in the ball $B_R$. Let $\boldsymbol  J(x)=\nabla  Q(x)$, then, $\boldsymbol  J\in (L^2(B_R))^3$. It follows from straightforward calculations that 
\begin{align}\label{JgQ}
\beta_{n}^{m}
&=\int_{B_R}\big[j_{n}(\kappa|y|)\overline{Y_{n}^{m}({\hat y})}\boldsymbol{J}(y)\big]{\rm d}y
+\kappa^{-2}\int_{B_R}\big[\big(\nabla_y\nabla_y j_{n}(\kappa|y|)\overline{Y_{n}^{m}({\hat y})} \big)\boldsymbol{J}(y)\big]{\rm d}y\nonumber\\
&=\int_{B_R}\big[j_{n}(\kappa|y|)\overline{Y_{n}^{m}({\hat y})}\boldsymbol{J}(y)\big]{\rm d}y
-\kappa^{-2}\int_{B_R}\big[\big(\Delta_y\nabla_y j_{n}(\kappa|y|)\overline{Y_{n}^{m}({\hat y})} \big)Q(y)\big]{\rm d} y\nonumber\\
&\quad+\kappa^{-2}\int_{\partial B_R}\big[\big(\boldsymbol  n(y)\cdot\nabla_y\nabla_y j_{n}(\kappa|y|)\overline{Y_{n}^{m}({\hat y})} \big)Q(y)\big]{\rm d}y\nonumber\\
&=\int_{B_R}\big[j_{n}(\kappa|y|)\overline{Y_{n}^{m}({\hat y})}\boldsymbol{J}(y)\big]{\rm d}y
-\kappa^{-2}\int_{B_R}\big[\big(\nabla_y\Delta_y j_{n}(\kappa|y|)\overline{Y_{n}^{m}({\hat y})} \big)Q(y)\big]{\rm d}y\nonumber\\
&=\int_{B_R}\big[j_{n}(\kappa|y|)\overline{Y_{n}^{m}({\hat y})}\boldsymbol{J}(y)\big]{\rm d}y
+\kappa^{-2}\int_{B_R}\big[\big(\Delta_y j_{n}(\kappa|y|)\overline{Y_{n}^{m}({\hat y})} \big)(\nabla Q(y))\big]{\rm d}y\nonumber\\
&=\int_{B_R}\big[j_{n}(\kappa|y|)\overline{Y_{n}^{m}({\hat y})}\boldsymbol{J}(y)\big]{\rm d}y
-\int_{B_R}\big[\big( j_{n}(\kappa|y|)\overline{Y_{n}^{m}({\hat y})} \big)(\nabla Q(y))\big]{\rm d}y
=0,
\end{align}
which implies that $\boldsymbol  J \in \mathcal{N}_1(R)$.

\subsection{A nonradiating source in $\mathcal{N}_2(R)$}

Choose $\kappa>0$ and $R>0$ such that $\kappa R$ is the first root of the Spherical Bessel function of order 0, i.e., $j_0(\kappa R)=0$. Define 
\begin{align}\label{CJm12}
\boldsymbol  J(x)
=\left\{
\begin{aligned}
&\frac{\nabla j_0^{m_1}(\kappa |x|)}{N^{m_1}}-\frac{\nabla j_0^{m_2}(\kappa |x|)}{N^{m_2}}, &\quad |x|<R,\\
&0, &\quad |x|\geq R,
\end{aligned}
\right.
\end{align}
where $m_1, m_2>2$ are integers, $m_1\neq m_2$, and 
\[
N^{m_l}=m_l\int_{0}^{R}j_{0}^{m_l-1}(\kappa r)j_1^2(\kappa r)r^2{\rm d}r,  \quad l=1, 2. 
\]
Evidently, we have $\frac{ j_0^{m_1}(\kappa |x|)}{N^{m_1}}-\frac{ j_0^{m_2}(\kappa |x|)}{N^{m_2}}\in H_0(\mathrm{grad};B_R)$. It is clear to note from Section \ref{exn2} that the function $\boldsymbol  J$ given by \eqref{CJm12} is a nonradiating source. Furthermore, for $|x|<R$, we have from straightforward calculations that 
\begin{align}\label{CJm12-1}
\nabla j_0^{m_l}(\kappa |x|)
=\kappa m_lj_0^{m_l-1}(\kappa |x|)j_0^{'}(\kappa |x|){\hat{x}}
=-\kappa m_l j_0^{m_l-1}(\kappa |x|)j_1(\kappa |x|){\hat{x}}
\neq 0,
\end{align}
and
\begin{align}\label{Jgrad-3}
&\nabla\cdot{\boldsymbol  J}(\kappa |x|)
=\sum_{s=1}^3\frac{\partial\big[- \big(\frac{ m_1 j_0^{m_1-1}(\kappa |x|)}{N^{m_1}}
-\frac{m_2 j_0^{m_2-1}(\kappa |x|)}{N^{m_2}}\big)j_1(\kappa |x|)\frac{\kappa x_s}{ |x|}\big]}{\partial x_s}\nonumber\\
&=\frac{\kappa^2 \big[m_1(m_1-1)j_0^{m_1-2}(\kappa |x|)j_1^{2}(\kappa |x|)
+ m_1 j_0^{m_1}(\kappa |x|)
-4 m_1(\kappa |x|)^{-1}j_0^{m_1-1}(\kappa |x|)j_1(\kappa |x|)\big]}{N^{m_1}}
\nonumber\\
&\quad-\frac{\kappa^2 \big[m_2(m_2-1)j_0^{m_2-2}(\kappa |x|)j_1^{2}(\kappa |x|)
+ m_2 j_0^{m_2}(\kappa |x|)
-4 m_2(\kappa |x|)^{-1}j_0^{m_2-1}(\kappa |x|)j_1(\kappa |x|)\big]}{N^{m_2}}\nonumber\\
&\neq 0.
\end{align}
Hence, it follows from \eqref{CJm12} and \eqref{Jgrad-3} that $\boldsymbol  J\in H_0(\mathrm{div};B_R)$. Then, we have
\begin{align*}
&\int_{B_R}{\boldsymbol  G}(x, y)\boldsymbol{J}(y){\rm d}y
=\mathrm{i}\kappa\sum_{n=0}^{\infty}\sum_{m=-n}^{n} h_{n}^{(1)}(\kappa|x|)Y_{n}^{m}({\hat x})\int_{B_R}\bigg[\Big(\big(\boldsymbol{I}+\kappa^{-2}
\nabla_y\nabla_y\big)j_{n}(\kappa|y|)\overline{Y_{n}^{m}({\hat y})} \Big)\boldsymbol{J}(y)\bigg]{\rm d}y\nonumber\\
&=\mathrm{i}\kappa\sum_{n=0}^{\infty}\sum_{m=-n}^{n} h_{n}^{(1)}(\kappa|x|)Y_{n}^{m}({\hat x})\nonumber\\
&\quad\quad\quad\quad\quad\quad\quad\times\bigg[\int_{B_R}j_{n}(\kappa|y|)\overline{Y_{n}^{m}({\hat y})} \boldsymbol{J}(y){\rm d}y -\frac{1}{\kappa^2}\int_{B_R}\Big(\nabla_{y}j_{n}(\kappa|y|)\overline{Y_{n}^{m}({\hat y})}\Big) \nabla\cdot\boldsymbol{J}(y){\rm d}y\bigg]\nonumber\\
&=\mathrm{i}\kappa\sum_{n=0}^{\infty}\sum_{m=-n}^{n} h_{n}^{(1)}(\kappa|x|)Y_{n}^{m}({\hat x})(\alpha_n^{m}+  \zeta_n^{m}), \quad |x|>R,
\end{align*}
which implies that $\boldsymbol  J\in \mathcal{N}_2(R)$ and $\alpha_n^{m}
+\zeta_n^{m}=0$ for all $n = 0,1,\dots, m = -n, \dots, n$.

Next, we prove that $\alpha_n^{m}
=\zeta_n^{m}=0$ for all $n = 0,1,\dots, m = -n, \dots, n$. Explicitly, the spherical harmonic functions $Y_{n}^{m}$ are given by  
\begin{align}\label{CJm12-5}
Y_{n}^{m}(\theta,\phi)=N_n^m P_{n}^{|m|}(\cos\theta) e^{{\rm i}m\phi},
\end{align}
where 
\[
N_n^m=\sqrt{\frac{(2n+1)(n-|m|)!}{4\pi(n+|m|)!}}, 
\]
and $P_{n}^{|m|}(t)$ are the associated Legendre functions and are
defined by
\begin{align}\label{CJm12-6}
P_{n}^{|m|}(t)
=(1-t^2)^{\frac{|m|}{2}}\frac{{\rm d}^{|m|}}{{\rm d} t^{|m|}}P_{n}(t), \quad -n \leq m \leq n, \quad |t| \leq 1.
\end{align}
Here $P_{n}(t)$ is the Legendre polynomial of degree $n$. Combining \eqref{CJm12}, \eqref{CJm12-1}, \eqref{CJm12-5}
and \eqref{CJm12-6}, we discuss the following two cases:

Case 1. $n=0,1,\cdots;$ $m=-n,\cdots,n$ and $m\neq 0,\pm 1$.

\begin{align}\label{NQ-0}
\alpha_{n}^{m}&=\int_{B_R}j_{n}(\kappa|y|)\overline{Y_{n}^{m}({\hat y})} \bigg(\frac{-m_1\kappa j_0^{m_1-1}(\kappa |y|)j_1(\kappa |y|){\hat{y}}}{N^{m_1}}-\frac{-m_2\kappa j_0^{m_2-1}(\kappa |y|)j_1(\kappa |y|){\hat{y}}}{N^{m_2}}\bigg)
{\rm d}y\nonumber\\
&=-N_n^m \kappa\bigg(\frac{m_1 \int_{0}^{R}j_n(\kappa r)j_{0}^{m_1-1}(\kappa r)j_1(\kappa r)r^2{\rm d}r}{N^{m_1}}
-\frac{m_2 \int_{0}^{R}j_n(\kappa r)j_{0}^{m_2-1}(\kappa r)j_1(\kappa r)r^2{\rm d}r}{N^{m_2}}
\bigg)\times\nonumber\\
&\quad\quad\bigg(\int_{0}^{\pi}\int_{0}^{2\pi}  \overline{P_{n}^{|m|}(\cos\theta) e^{{\rm i}m\phi}}\sin\theta\begin{pmatrix}
\sin\theta\cos\phi\\
\sin\theta\sin\phi\\
\cos\theta
\end{pmatrix}
{\rm d}\phi{\rm d}\theta\bigg)\nonumber\\
&=-N_n^m \kappa\bigg(\frac{m_1 \int_{0}^{R}j_n(\kappa r)j_{0}^{m_1-1}(\kappa r)j_1(\kappa r)r^2{\rm d}r}{N^{m_1}}
-
\frac{m_2 \int_{0}^{R}j_n(\kappa r)j_{0}^{m_2-1}(\kappa r)j_1(\kappa r)r^2{\rm d}r}{N^{m_2}}
\bigg)\times\nonumber\\
&\quad\quad \begin{pmatrix}
\big(\int_{0}^{\pi}P_{n}^{|m|}(\cos\theta)\sin^2\theta{\rm d}\theta\big)\times 0\\
\big(\int_{0}^{\pi}P_{n}^{|m|}(\cos\theta)\sin^2\theta{\rm d}\theta\big)\times 0\\
\big(\int_{0}^{\pi}P_{n}^{|m|}(\cos\theta)\sin\theta\cos\theta{\rm d}\theta\big)\times 0
\end{pmatrix}\nonumber\\
&=-N_n^m \kappa\bigg(\frac{m_1 \int_{0}^{R}j_n(\kappa r)j_{0}^{m_1-1}(\kappa r)j_1(\kappa r)r^2{\rm d}r}{N^{m_1}}
-
\frac{m_2 \int_{0}^{R}j_n(\kappa r)j_{0}^{m_2-1}(\kappa r)j_1(\kappa r)r^2{\rm d}r}{N^{m_2}}
\bigg)\times \begin{pmatrix}
0\\
0\\
0
\end{pmatrix}
\nonumber\\
&=0.
\end{align}

Case 2. $n=0,1,\cdots;$ $m=0, \pm1$.

\begin{align}\label{NQ-1}
\alpha_{n}^{0}
&=-N_n^0 \kappa\bigg(\frac{m_1 \int_{0}^{R}j_n(\kappa r)j_{0}^{m_1-1}(\kappa r)j_1(\kappa r)r^2{\rm d}r}{N^{m_1}}
-\frac{m_2 \int_{0}^{R}j_n(\kappa r)j_{0}^{m_2-1}(\kappa r)j_1(\kappa r)r^2{\rm d}r}{N^{m_2}}
\bigg)\times\nonumber\\
&\quad\quad\bigg(\int_{0}^{\pi}\int_{0}^{2\pi}  \overline{P_{n}^{0}(\cos\theta) e^{\mathbf{i}0\phi}}\sin\theta\begin{pmatrix}
\sin\theta\cos\phi\\
\sin\theta\sin\phi\\
\cos\theta
\end{pmatrix}
{\rm d}\phi{\rm d}\theta\bigg)\nonumber\\
&=-N_n^0 \kappa\bigg(\frac{m_1 \int_{0}^{R}j_n(\kappa r)j_{0}^{m_1-1}(\kappa r)j_1(\kappa r)r^2{\rm d}r}{N^{m_1}}
-
\frac{m_2 \int_{0}^{R}j_n(\kappa r)j_{0}^{m_2-1}(\kappa r)j_1(\kappa r)r^2{\rm d}r}{N^{m_2}}
\bigg)\times\nonumber\\
&\quad\quad \begin{pmatrix}
0\\
0\\
2\pi\big(\int_{-1}^{1}P_{n}(t)P_{1}(t){\rm d}t\big)
\end{pmatrix}.
\end{align}
Noting
\begin{align}\label{NQ-2}
\int_{-1}^{1}P_{n}(t)P_{1}(t){\rm d}t
=
\left\{\begin{array}{lll}
	\frac{2}{2+1}, && n=1,\vspace{3pt}\\
	0,&& n\neq1.
\end{array}\right.
\end{align}
 we obtain from  \eqref{NQ-1} and \eqref{NQ-2} that 
\begin{align}\label{NQ-3}
\alpha_{n}^{0}
&=\left\{\begin{array}{lll}
	-N_n^0 \kappa\bigg(\frac{N^{m_1}}{N^{m_1}}
-
\frac{N^{m_2}}{N^{m_2}}
\bigg)\times \begin{pmatrix}
0\\
0\\
\frac{4\pi}{3}
\end{pmatrix}, && n=1,\vspace{3pt}\\
\begin{pmatrix}
0\\
0\\
0
\end{pmatrix},&& n\neq1,
\end{array}\right.
=0.
\end{align}

Furthermore,
\begin{align}\label{NQ-4}
\alpha_{n}^{\pm1}
&=-N_n^{\pm1} \kappa\bigg(\frac{m_1 \int_{0}^{R}j_n(\kappa r)j_{0}^{m_1-1}(\kappa r)j_1(\kappa r)r^2{\rm d}r}{N^{m_1}}
-
\frac{m_2 \int_{0}^{R}j_n(\kappa r)j_{0}^{m_2-1}(\kappa r)j_1(\kappa r)r^2{\rm d}r}{N^{m_2}}
\bigg)\times\nonumber\\
&\quad\quad\bigg(\int_{0}^{\pi}\int_{0}^{2\pi}  \overline{P_{n}^{\pm1}(\cos\theta) e^{\pm{\rm i}\phi}}\sin\theta\begin{pmatrix}
\sin\theta\cos\phi\\
\sin\theta\sin\phi\\
\cos\theta
\end{pmatrix}
{\rm d}\phi{\rm d}\theta\bigg)\nonumber\\
&=-N_n^{\pm1} \kappa\bigg(\frac{m_1 \int_{0}^{R}j_n(\kappa r)j_{0}^{m_1-1}(\kappa r)j_1(\kappa r)r^2{\rm d}r}{N^{m_1}}
-
\frac{m_2 \int_{0}^{R}j_n(\kappa r)j_{0}^{m_2-1}(\kappa r)j_1(\kappa r)r^2{\rm d}r}{N^{m_2}}
\bigg)\times\nonumber\\
&\quad\quad \begin{pmatrix}
\big(\int_{0}^{\pi}P_{n}^{\pm1}(\cos\theta)\sin^2\theta{\rm d}\theta\big)\big(\int_{0}^{2\pi} \cos\phi e^{\mp{\rm i}\phi} {\rm d}\phi\big)\\
\big(\int_{0}^{\pi}P_{n}^{\pm1}(\cos\theta)\sin^2\theta{\rm d}\theta\big)\big(\int_{0}^{2\pi} \sin\phi e^{\mp{\rm i}\phi} {\rm d}\phi\big)\\
\big(\int_{0}^{\pi}P_{n}^{\pm1}(\cos\theta)\sin\theta\cos\theta{\rm d}\theta\big)\big(\int_{0}^{2\pi}e^{\mp{\rm i}\phi}{\rm d}\phi\big)
\end{pmatrix}\nonumber\\
&=-N_n^{\pm1} \kappa\bigg(\frac{m_1 \int_{0}^{R}j_n(\kappa r)j_{0}^{m_1-1}(\kappa r)j_1(\kappa r)r^2{\rm d}r}{N^{m_1}}
-
\frac{m_2 \int_{0}^{R}j_n(\kappa r)j_{0}^{m_2-1}(\kappa r)j_1(\kappa r)r^2{\rm d}r}{N^{m_2}}
\bigg)\times\nonumber\\
&\quad\quad \begin{pmatrix}
\big(\int_{0}^{\pi}P_{n}^{\pm1}(\cos\theta)\sin^2\theta{\rm d}\theta\big)\big(\int_{0}^{2\pi} \cos^2\phi  {\rm d}\phi\big)\\
\mp{\rm i}\big(\int_{0}^{\pi}P_{n}^{\pm1}(\cos\theta)\sin^2\theta{\rm d}\theta\big)\big(\int_{0}^{2\pi} \sin^2\phi {\rm d}\phi\big)\\
 0
\end{pmatrix}.
\end{align}

Noting that 
\begin{align}\label{NQ-5}
&\int_{0}^{\pi}P_{n}^{\pm1}(\cos\theta)\sin^2\theta{\rm d}\theta
=\int_{0}^{\pi}(1-\cos^2\theta)^{\frac{1}{2}}\frac{{\rm d }P_{n}(\cos\theta)}{{\rm d} \cos\theta}\sin^2\theta{\rm d}\theta\nonumber\\
&=\int_{0}^{\pi}\sin\theta\frac{{\rm d }P_{n}(\cos\theta)}{{\rm d} \cos\theta}(1-\cos^2\theta){\rm d}\theta
=\int_{-1}^{1}\frac{{\rm d }P_{n}(t)}{{\rm d} t}(1-t^2){\rm d}t\nonumber\\
&=(1-t^2)P_{n}(t)\big|_{-1}^{1}+2\int_{-1}^{1}tP_{n}(t){\rm d}t
=2\int_{-1}^{1}P_{1}(t)P_{n}(t){\rm d}t
=
\left\{\begin{array}{lll}
	\frac{4}{2+1}, && n=1,\vspace{3pt}\\
	0,&& n\neq1.
\end{array}\right.
\end{align}
Hence, from  \eqref{NQ-4} and \eqref{NQ-5}, we can deduce 
\begin{align}\label{NQ-6}
\alpha_{n}^{\pm1}
&=\left\{\begin{array}{lll}
	-N_n^{\pm1} \kappa\bigg(\frac{N^{m_1}}{N^{m_1}}
-
\frac{N^{m_2}}{N^{m_2}}
\bigg)\times 
\begin{pmatrix}
\big(\int_{0}^{\pi}P_{1}^{\pm1}(\cos\theta)\sin^2\theta{\rm d}\theta\big)\big(\int_{0}^{2\pi} \cos^2\phi  {\rm d}\phi\big)\\
\mp{\rm i}\big(\int_{0}^{\pi}P_{1}^{\pm1}(\cos\theta)\sin^2\theta{\rm d}\theta\big)\big(\int_{0}^{2\pi} \sin^2\phi {\rm d}\phi\big)\\
 0
\end{pmatrix}, && n=1,\vspace{3pt}\\
	-N_n^{\pm1} \kappa\bigg(\frac{m_1 \int_{0}^{R}j_n(\kappa r)j_{0}^{m_1-1}(\kappa r)j_1(\kappa r)r^2{\rm d}r}{N^{m_1}}
-
\frac{m_2 \int_{0}^{R}j_n(\kappa r)j_{0}^{m_2-1}(\kappa r)j_1(\kappa r)r^2{\rm d}r}{N^{m_2}}
\bigg)\times \begin{pmatrix}
0\\
0\\
0
\end{pmatrix},&& n\neq1,
\end{array}\right.\nonumber\\
&=\left\{\begin{array}{lll}
	0\times \begin{pmatrix}
\frac{4}{3}\big(\int_{0}^{2\pi} \cos^2\phi  {\rm d}\phi\big)\\
\mp{\rm i}\frac{4}{3}\big(\int_{0}^{2\pi} \sin^2\phi {\rm d}\phi\big)\\
 0
\end{pmatrix}, && n=1,\vspace{3pt}\\
\begin{pmatrix}
0\\
0\\
0
\end{pmatrix},&& n\neq1,
\end{array}\right.
=0.
\end{align}

It follows from \eqref{NQ-0},  \eqref{NQ-3},  and \eqref{NQ-6}
that $\alpha_n^{m}=0$ for all $n = 0,1,\dots, m = -n, \dots, n$. From \eqref{EINE-11} and \eqref{JgQ}, we obtain
\begin{align*}
\zeta_n^{m}
&=-\kappa^{-2}\int_{B_R}\big(\nabla (j_{n}(\kappa|y|)\overline{Y_{n}^{m}({\hat y})})\big)\nabla\cdot{\boldsymbol  J}(y){\rm d}y\nonumber\\
&=\kappa^{-2}\int_{B_R}-\kappa^2 (j_{n}(\kappa|y|)\overline{Y_{n}^{m}({\hat y})}){\boldsymbol  J}(y){\rm d}y
=-{\alpha}_n^{m}=0.
\end{align*}

\begin{remark}\label{rem-J}
For $\kappa>0$, $R>0$, and $s>2$, consider $j_0(\kappa R)=0$ and
\begin{align*}
\boldsymbol  J(x)&=\left\{
\begin{aligned}
&\nabla j_0^{s}(\kappa |x|), &\quad |x|<R,\\
&0, &\quad |x|\geq R.
\end{aligned}
\right.
\end{align*}
Similarly, it can be shown that the function $\boldsymbol  J\in \mathcal{N}_2(R)$ is a nonradiating source, where 
$\alpha_{1}^{m}\neq 0, m=-1,0,1$ and $\alpha_n^{m}+\zeta_n^{m}=0$ for all $n = 0,1,\dots, m = -n, \dots, n$.
\end{remark}

\section{Conclusion}\label{S:co}

This paper explores the existence and characteristics of nonradiating sources in the context of Maxwell's equations. A nonradiating source is defined as a source that induces an electromagnetic field to be zero outside a finite region. This concept is closely associated with nonuniqueness in the inverse source problem. Starting from different governing equations for the electric field and electric current density, we develop various characterizations to elaborate the properties of nonradiating sources, taking into account their varying degrees of regularity. Moreover, the characterizations are investigated with respect to far-field patterns and near-field data of the electric field, as well as the null spaces of integral operators. The study involves the explicit construction of several illustrative examples demonstrating the existence of nonradiating sources in different null spaces. This work establishes a framework for comprehending the design or identification of specific arrangements of electric current densities, ensuring that they do not emit electromagnetic radiation.

\end{document}